\numberwithin{equation}{section}
\theoremstyle{plain}
\tikzset{
  c/.style={every coordinate/.try}
}
\tikzstyle arrowstyle=[scale=1]
\tikzstyle directed=[postaction={decorate,decoration={markings,mark=at position 0.6 with {\arrow[arrowstyle]{stealth};}}}]
\tikzstyle reverse directed=[postaction={decorate,decoration={markings,mark=at position 0.4 with {\arrowreversed[arrowstyle]{stealth};}}}]
\tikzstyle dot=[style={circle,inner sep=1pt,fill}]
\newtheorem{theorem}{Theorem}[section]
\newtheorem{lemma}[theorem]{Lemma}
\newtheorem{corollary}[theorem]{Corollary}
\newtheorem{proposition}[theorem]{Proposition}
\theoremstyle{definition}
\newtheorem{example}[theorem]{Example}
\newtheorem{?}[theorem]{Problem}
\newcommand\maxdrop{\operatorname{maxdrop}}
\newcommand\vpk{\operatorname{vpk}}
\newcommand\des{\operatorname{des}}
\newcommand\vnw{\operatorname{vnw}}
\newcommand\inv{\operatorname{inv}}
\newcommand\sor{\operatorname{sor}}
\newcommand\lmax{\operatorname{lmax}}
\newcommand\cyc{\operatorname{cyc}}
\newcommand\DIS{\operatorname{DIS}}
\newcommand\Lmap{\operatorname{Lmap}}
\newcommand\Lmal{\operatorname{Lmal}}
\newcommand\Rmil{\operatorname{Rmil}}
\newcommand\Rmip{\operatorname{Rmip}}
\newcommand\rmin{\operatorname{rmin}}
\newcommand\Cyc{\operatorname{Cyc}}
\newcommand\rmax{\operatorname{rmax}}
\newcommand\Rmal{\operatorname{Rmal}}
\def\boxit#1{\leavevmode\hbox{\vrule\vtop{\vbox{\kern.33333pt\hrule
    \kern1pt\hbox{\kern1pt\vbox{#1}\kern1pt}}\kern1pt\hrule}\vrule}}
\begin{document}

\title[Statistics on permutations with bounded drop size]{Statistics on
permutations with bounded drop size}

\author[J.~N.~Chen]{Joanna N. Chen$^1$}
\address{$^1$ College of Science, Tianjin University of Technology, Tianjin 300384, P.R. China.}
\email{joannachen@tjut.edu.cn}

\author[R.~D.~P.~Zhou]{Robin D.P. Zhou$^{2,*}$}
\address{$^2$ College of Mathematics Physics and Information, Shaoxing University, Shaoxing 312000, P.R. China}
\email{dapao2012@163.com}

\thanks{$^*$ Corresponding author.
Email: dapao2012@163.com.}

\date{\today}

\begin{abstract}
Permutations with bounded drop size, which we also call bounded permutations,  was introduced by Chung, Claesson, Dukes and Graham.  Petersen introduced a new Mahonian statistic  the sorting index, which is denoted by $\sor$.
 Meanwhile, Wilson introduced the statistic $\DIS$, which turns out to   satisfy that
  $\sor(\sigma)=\DIS(\sigma^{-1})$ for any permutation $\sigma$. In this paper, we
maintain Petersen's method to deduce the generating functions of  $(\inv, \lmax)$ and $(\DIS, \cyc)$ over bounded permutations to
 show their equidistribution.  Moreover,  the generating function of  $\des$ over $213$-avoiding bounded permutations and some related equidistributions are given as well.
\end{abstract}

\keywords{}

\maketitle


\section{Introduction}\label{sec:intro}
This paper is aim to study statistics on bounded permutations and $213$-avoiding bounded permutations.

Let us give an overview of notation and terminology first.
Denote by $S_n$  the set of the permutations of $[n]$, where $[n]=\{1,2,\ldots,n\}$.
For a permutation $\pi=\pi_1 \pi_2 \cdots \pi_n$ in $S_n$,
a pair $(\pi_i,\pi_j)$ is called an inversion if $i<j$ and $\pi_i>\pi_j$.
Let $\inv(\pi)$ denote the number of inversions of $\pi$.
An element $i$ is called a descent of $\pi$ if $\pi_i >\pi_{i+1}$ and the descent number $\des(\pi)$ is defined to be the number of the descents of $\pi$. We say $i$ is an excedance of $\pi$ if $\pi_i > i$. While, $i$ is called
a weak excedance of $\pi$
if $\pi_i \geq i$.  Otherwise, $i$ is called a non-weak excedance.
Any permutation
statistics with the same distribution as $\inv$ are referred to as Mahonian statistics, while those equidistributed
with  $\des$ and the excedance number are called Eulerian statistics.

Given a word $w=w_1 w_2 \cdots w_n$  of length $n$, let
\begin{align*}
  \Lmap(w) &= \{i\colon w_j <w_i \text{ for all } j<i \}, \\
 \Lmal(w) &=  \{w_i \colon w_j <w_i \text{ for all } j<i \}, \\
  \Rmip(w) &= \{i\colon w_j >w_i \text{ for all } j>i \},\\
  \Rmil(w) &= \{w_i \colon w_j >w_i \text{ for all } j>i \}, \\
  \Rmal(w) &= \{w_i\colon w_j <w_i \text{ for all } j>i \}
\end{align*}
be the set of left-to-right maximum places, left-to-right maximum letters,
right-to-left minimum positions, right-to-left minimum letters  and
right-to left maximum letters respectively.
Their corresponding numerical statistics are denoted by
$\lmax(w) = \# \Lmal(w)$, $\rmin(w) = \#\Rmil(w)$ and
$\rmax(w) = \# \Rmal(w).$

Besides the linear orders, permutations can be also seen as functions.
For example, we may consider $\sigma=34152$ as a function $\sigma \colon [5] \to [5]$ with $
\sigma(1)=3,
\sigma(2)=4, \sigma(3)=1, \sigma(4)=5$ and $\sigma(5)=2$.
Thus, we can write $\sigma=(13)(245)$  and call it a cycle notation of $\sigma$.
Assume that $\sigma=\sigma_1 \sigma_2 \cdots \sigma_n$ and the bijection
$i \mapsto \sigma_i$ has $r$ disjoint cycles, whose minimum elements are $c_1, c_2,\ldots, c_r$.
Let $\Cyc(\sigma)=\{c_1, c_2,\ldots, c_r\}$  and the number of cycles of $\sigma$, $\cyc(\sigma)$, is defined to be the cardinality of $\Cyc(\sigma)$.

An occurrence of a classical pattern $p$ in a permutation $\sigma$ is a subsequence of $\sigma$ that is order-isomorphic to $p$.  Babson and Steingr\'{i}msson \cite{BS} generalized the notion of permutation patterns, to what are now known as vincular patterns, see \cite{Kitaev2011}.
 Adjacent letters in a vincular pattern which are
 underlined must stay adjacent when they are placed  back to the original permutation.
 For an instance, $41253$  contains only one occurrence of the vincular pattern $\underline{31}42$ in its subsequence $4153$, but not in $4253$.

Motivated by juggling sequence and bubble sort, Chung, Claesson, Dukes and Graham \cite{ChungClaesson} introduced the maximum drop size statistic on permutations. For more about bounded permutations, see \cite{ChungGraham, Joanna, Hyatt}.
We say that $\pi \in S_n$ has a drop at $i$ if $\pi_i <i$ and the drop size is meant to be $i-\pi_i$.
The maximum drop size of $\pi$ is defined by
\begin{equation*}
\mathrm{maxdrop(\pi)}=\max\{i-\pi_i\colon 1 \leq i \leq n\}.
\end{equation*}
Let $A(n,k)=\{\pi \in S_n \colon \maxdrop(\pi) \leq k\}$.
For convenience, we call permutations with bounded drop size
the bounded permutations.


Petersen\cite{Petersen} introduced a new  statistic, which is called the sorting index. Given a permutation $\pi$ in $S_n$, it is known that $\pi$
has a unique decomposition into transpositions
\[\pi=(i_1,j_1)(i_2,j_2)\cdots (i_k,j_k),\]
where
\[j_1<j_2< \cdots <j_k\]
and
\[i_1<j_1,i_2<j_2,\ldots,i_k<j_k.\]
The sorting index is defined by
\[\mathrm{sor(\pi)}=\sum_{r=1}^{k}(j_r-i_r).\]

Petersen showed that the sorting index is Mahonian.
Moreover, by using two different factorizations of $\sum_{\pi \in S_n} \pi$, he derived that
\begin{equation*}
    \sum_{\pi \in S_n} q^{\mathrm{sor(\pi)}}t^{\mathrm{cyc(\pi)}}=\sum_{\pi \in S_n} q^{\mathrm{inv(\pi)}}t^{\mathrm{rmin(\pi)}}=\prod_{i=1}^{n}
    (t+[i]_q-1),
\end{equation*}
where $[i]_q=1+q+\cdots+q^{i-1}$.

Based on the standard algorithm for generating a random permutation, Wilson\cite{wilson2010} defined a permutation statistic $\DIS$. It turns out that for a permutation $\pi$ in $S_n$, we have $\DIS(\pi^{-1})=\sor(\pi)$.
The first of our main results in this paper is to generalize Petersen's results to   bounded permutations, which are stated as
follows.

\begin{theorem}\label{thm:tha}
For $0 \leq k<n$, $(\inv,\lmax)$ and $(\DIS,\cyc)$ have the same joint distribution over $A(n,k)$. Moreover,  we have
\begin{equation*}
     \sum_{\sigma \in A(n,k)} q^{\inv(\sigma)}t^{\lmax(\sigma)}=\sum_{\sigma \in A(n,k)} q^{\DIS(\sigma)}t^{\cyc(\sigma)}=(t+[k+1]_q-1)^{n-k}
     \prod_{i=1}^{k}(t+[i]_q-1).
\end{equation*}
\end{theorem}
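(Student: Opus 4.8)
The proof naturally splits into two independent parts: computing $\sum_{\sigma \in A(n,k)} q^{\inv(\sigma)}t^{\lmax(\sigma)}$, and computing $\sum_{\sigma \in A(n,k)} q^{\DIS(\sigma)}t^{\cyc(\sigma)}$; equality of the two polynomials then yields the equidistribution statement. The first part is the more elementary one. The plan is to use the standard ``subexceedant function'' / inversion-table encoding of a permutation. Writing $\sigma$ via its Lehmer-type code $(c_1,\dots,c_n)$ with $0\le c_i\le i-1$, where $c_i$ counts inversions $(\sigma_j,\sigma_i)$ with $j<i$, one has $\inv(\sigma)=\sum_i c_i$ and, crucially, position $i$ is a left-to-right maximum of $\sigma$ exactly when $c_i=0$ (for a suitable reading direction), so $\lmax$ counts the zero entries of the code. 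The key remaining point is to show that the bounded-drop condition $\maxdrop(\sigma)\le k$ translates into the local constraint $c_i \le k$ for every $i$ (or the appropriate analogue after fixing conventions). Granting that, the generating function factors as a product over positions: each of the first $k$ positions contributes $t+q+q^2+\cdots+q^{i-1}=t+[i]_q-1$ (the $t$ accounting for $c_i=0$), and each of the remaining $n-k$ positions contributes $t+q+\cdots+q^{k}=t+[k+1]_q-1$. This gives the right-hand side directly.

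For the second part I would follow Petersen's transposition-factorization method, adapted to the bounded setting. Every $\pi\in S_n$ has its unique sorting factorization $\pi=(i_1,j_1)\cdots(i_k,j_k)$ with $j_1<\cdots<j_k$ and $i_r<j_r$; equivalently, reading from $n$ down to $1$, for each $j$ there is a well-defined ``$\DIS$ contribution'' $d_j\in\{0,1,\dots,j-1\}$, with $\DIS(\pi)=\sum_j d_j$, and $j\in\Cyc(\pi)$ (a cycle minimum, in the relevant sense) precisely when $d_j=0$. The task is to identify which contribution vectors $(d_1,\dots,d_n)$ arise from permutations in $A(n,k)$. Here one should exploit the characterization of bounded permutations from \cite{ChungClaesson} (or re-derive it): I expect the condition $\maxdrop(\pi)\le k$ to be equivalent to $d_j \le k$ for all $j$ in this encoding, exactly mirroring the code constraint from the first part. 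The generating function then factors identically, term by term, and one reads off $(t+[k+1]_q-1)^{n-k}\prod_{i=1}^k(t+[i]_q-1)$.

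Alternatively, and perhaps more cleanly for the $(\DIS,\cyc)$ side, I would mimic Petersen literally by working with $\sum_{\pi} \pi$ inside the group algebra and using the two factorizations he used to prove $\sum_{S_n} q^{\sor}t^{\cyc}=\sum_{S_n} q^{\inv}t^{\rmin}=\prod_i(t+[i]_q-1)$; one restricts each ``building block'' (the partial products $\prod_{i\le j}(\text{terms of weight} \le k)$) to only allow transpositions that keep the running permutation within bounded-drop $k$, and checks that this restriction is compatible with both factorizations. Because $\DIS(\pi)=\sor(\pi^{-1})$ and $\maxdrop$ is essentially symmetric under $\pi\mapsto\pi^{-1}$ on the relevant statistics (one must verify $A(n,k)$ is closed, or nearly so, under the maps involved), this transfers Petersen's identity to $A(n,k)$.

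**Main obstacle.** The crux in both halves is the same: proving that the global geometric condition $\maxdrop(\sigma)\le k$ is \emph{exactly} the pointwise bound ``every code/contribution entry is $\le k$'' in the chosen encoding. The inequality $\maxdrop(\sigma)\le k \Rightarrow c_i\le k$ should be straightforward (a large code entry forces a large drop somewhere), but the converse — that bounding all code entries by $k$ never creates a drop exceeding $k$ — requires a careful combinatorial argument, likely an induction on $n$ building $\sigma$ by insertion and tracking how drops evolve, or a direct appeal to the structure theorem for $A(n,k)$ in \cite{ChungClaesson}. Getting the conventions (reading direction, which code, cycle-minimum vs.\ left-to-right structure) aligned so that the \emph{same} pointwise bound governs both $(\inv,\lmax)$ and $(\DIS,\cyc)$ is where the real content lies; once that is in place, both generating-function computations are routine products.
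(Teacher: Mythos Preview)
Your plan is correct and is essentially the paper's approach: the paper executes both halves via Petersen-style group-algebra factorizations of $\sum_\sigma \sigma$ (your ``alternative''), which are exactly the code/insertion encodings you describe, and then reads off the product formula term by term. The one refinement that dissolves your ``main obstacle'' is that the paper first passes from $A(n,k)$ to $S(n,k)=\{\sigma:\max_i(\sigma_i-i)\le k\}$ via $\sigma\mapsto\sigma^{-1}$ (using $\inv(\sigma)=\inv(\sigma^{-1})$, $\lmax(\sigma)=\rmin(\sigma^{-1})$, $\DIS(\sigma)=\sor(\sigma^{-1})$, $\cyc(\sigma)=\cyc(\sigma^{-1})$); $A(n,k)$ is \emph{not} closed under inversion, but its image is precisely $S(n,k)$, and on $S(n,k)$ the bounded condition is transparently the pointwise bound ``insert $m$ no farther left than position $m-k$'' (resp.\ ``transpose $m$ only with $m-1,\dots,m-k$''), which is exactly the restriction $c_i\le k$ you were looking for.
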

Moreover, we will show that a bijection constructed by Foata and Han \cite{Han} can be restricted to give an explanation of the equidistribution above.

Hyatt and Remmel \cite{HyattRemmel} initialed
the study of pattern avoiding bounded permutations. They gave the number of $231$-avoiding bounded permutations with $j$ descents.
 Our second main result is concerned with
 the distribution of descents over $213$-avoiding bounded permutations.
Let $A_{n,k}(\sigma)$
 denote the set of $\sigma$-avoiding permutations in $A(n,k)$.
 We have the following proposition.

\begin{proposition}\label{prop:213ballot}
For $0 \leq k \leq n$, we have $A_{n,k}(213)$ is enumerated by the
ballot number $C_{n,k}$, where
\[C_{n,k}=\frac{n-k+1}{n+1}{{n+k}\choose{k}}.\]
\end{proposition}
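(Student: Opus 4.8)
The plan is to set up a bijection between $A_{n,k}(213)$ and a family of lattice paths counted by the ballot number $C_{n,k}$, say the set of lattice paths from $(0,0)$ to $(n,k)$ using unit east and north steps that never rise strictly above the diagonal line $y = x$ (equivalently, $k \leq n$ and at every prefix the number of north steps does not exceed the number of east steps). It is classical that such paths are enumerated by $C_{n,k} = \frac{n-k+1}{n+1}\binom{n+k}{k}$, so it suffices to produce a bijection (or an equivalent recursive argument) with the correct parameters.

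First I would analyze the structure of a $213$-avoiding permutation $\pi = \pi_1 \cdots \pi_n$. A standard fact is that $\pi$ avoids $213$ if and only if each prefix $\pi_1 \cdots \pi_i$ consists of a set of values that is itself a union of intervals in a controlled way; more usefully, $213$-avoiders are exactly the permutations for which, reading left to right, every left-to-right maximum $\pi_i$ equals $\max\{\pi_1,\dots,\pi_i\}$ and the values strictly below the current maximum that have not yet appeared must be emitted in increasing order before any larger value. Concretely, I would use the recursive decomposition: if $\pi \in S_n$ avoids $213$ and $\pi_1 = m$, then the entries to the left-of-nowhere — rather, $\{1,\dots,m-1\}$ must all appear before $\{m+1,\dots,n\}$, and both blocks are themselves $213$-avoiding on their value sets. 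This gives a bijection $\pi \leftrightarrow (\pi', \pi'')$ with $\pi'$ a $213$-avoider on $\{1,\dots,m-1\}$ placed in positions $2,\dots,m$ and $\pi''$ a $213$-avoider on $\{m+1,\dots,n\}$ in positions $m+1,\dots,n$.

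The main step is to track what the drop-size bound does under this decomposition. The key observation is that $\maxdrop(\pi) = \max\big(\,\maxdrop \text{ contribution from the first block}, \maxdrop(\pi'')\,\big)$, and that within the first block the drops are shifted: position $i$ (for $2 \leq i \leq m$) carries entry $\pi'_{i-1} \in \{1,\dots,m-1\}$, so the drop at $i$ is $i - \pi'_{i-1}$, which can be as large as $m - 1$. In fact the largest drop forced by putting $\pi_1 = m$ at the front is $m-1$ (achieved at the last position of that first block when it holds the value $1$, i.e. when $1$ is not already forced earlier). So the condition $\maxdrop(\pi) \leq k$ translates into: $m - 1 \leq k$ together with the sub-permutations also being bounded by $k$ relative to their own positions. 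Writing $f(n,k) = \# A_{n,k}(213)$, this yields a recurrence of the form $f(n,k) = \sum_{m} f(\text{left size}, k)\, f(\text{right size}, k)$ over the admissible first letters $m$ with $m \leq k+1$, which I would then match against the standard recurrence $C_{n,k} = C_{n-1,k} + C_{n,k-1}$ for ballot numbers (with boundary $C_{n,0}=1$ and $C_{n,k}=0$ for $k>n$). The cleanest route is probably to encode the choice "how far does the current maximum jump" as the lattice-path steps directly, so that an east step corresponds to placing the next value in its forced increasing run and a north step corresponds to the maximum jumping upward, with the bound $\maxdrop \leq k$ becoming exactly the constraint $\#\text{north} - \#\text{east} \leq 0$ along the path read appropriately, capped by $k$.

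The hard part will be pinning down the precise dictionary between the drop bound and the path constraint — in particular verifying that $\maxdrop(\pi) \leq k$ is \emph{equivalent} to (not merely implied by) the path staying weakly below the diagonal, since a naive encoding tends to give "$\leq k$" only as a sufficient condition. I expect to need the sharper structural description of $213$-avoiders: such a $\pi$ is determined by the positions of its left-to-right maxima together with the values in between (which are forced to be increasing), and $\maxdrop(\pi)$ is then exactly $\max_i (p_i - v_i)$ where the $p_i, v_i$ run over positions and values of left-to-right maxima — and this maximum is attained at the \emph{last} left-to-right maximum before a long increasing run. Making that equivalence exact, and checking the boundary cases $k=0$ (only the identity, $C_{n,0}=1$) and $k \geq n$ (all $213$-avoiders, $C_{n,n} = \frac{1}{n+1}\binom{2n}{n}$, the Catalan number, consistent with $|A_{n,n}(213)| = |S_n(213)| = C_n$), is where the real work lies; once the bijection is set up correctly the enumeration is immediate from the known formula for $C_{n,k}$.
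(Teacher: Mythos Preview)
Your decomposition by the first letter is stated incorrectly for $213$-avoiders. If $\pi$ avoids $213$ and $\pi_1=m$, then it is the values $\{m+1,\dots,n\}$ that must all precede the values $\{1,\dots,m-1\}$, not the other way around: indeed, if some $v<m$ sits at position $b$ and some $w>m$ sits at a later position $c>b$, then $(\pi_1,\pi_b,\pi_c)=(m,v,w)$ is a $213$ pattern. (The permutation $231$ already witnesses the failure of your version.) With the corrected block order, your drop bookkeeping breaks down: the ``second'' block now carries the small values $\{1,\dots,m-1\}$ in the \emph{last} $m-1$ positions, so the drops there are huge (at least $n-2m+3$), and the constraint $\maxdrop\le k$ does not reduce to $m-1\le k$. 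Even within your intended first block, the maximal drop is $1+\maxdrop(\pi')$, not the flat $m-1$ you claim. So as written the recurrence you are aiming for does not emerge.

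The paper takes a much shorter route that sidesteps all of this. The key lemma is that for any $\pi\in S_n(213)$ one has $\maxdrop(\pi)=n-\pi_n$: since the values $1,2,\dots,\pi_n$ must occur in increasing order (else they would form a $21$ completed to $213$ by $\pi_n$), one checks $\pi^{-1}(j)-j\le n-\pi_n$ for all $j$, with equality at $j=\pi_n$. Hence $A_{n,k}(213)$ is exactly the set of $213$-avoiders whose last letter is at least $n-k$. Deleting the last letter and standardizing gives a bijection from $\{\pi\in S_n(213):\pi_n=n-i\}$ to $A_{n-1,i}(213)$, so
\[
\#A_{n,k}(213)=\sum_{i=0}^{k}\#A_{n-1,i}(213),
\]
which is precisely the ballot recurrence $C_{n,k}=\sum_{i=0}^{k}C_{n-1,i}$. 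If you want to salvage a lattice-path bijection, build it from this observation (decompose by the \emph{last} letter, not the first): each step ``delete $\pi_n$'' contributes one east step and $(n-\pi_n)$ north steps, and the condition $n-\pi_n\le k$ at every stage is exactly the sub-diagonal constraint.
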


Let $S_n^k(321)$ be the set of $321$-avoiding permutations of length $n$ with the last element $k+1$. Lin and Kim \cite{LinKim}
mentioned that  $S_{n+1}^k(321)$ is enumerated by $C_{n,k}$.
Bailey \cite{Bailey} introduced non-negative $n,k$-arrangements
of $1$'s and $-1$'s. Assume that $\Gamma_{n,k}$ is the set of sequences with non-negative partial sums that can be formed by
 $n$ $1$'s and $k$ $-1$'s. Namely, given $a=a_1 a_2 \cdots a_{n+k} \in \Gamma_{n,k}$, we have $a_1+\cdots +a_{i} \geq 0$ for $1 \leq i \leq n+k$. Bailey showed that $\Gamma_{n,k}$ is also counted by
 $C_{n,k}$.

Let $\vnw(\pi)$ be the number of the non-weak excedances  that are before the last element of $\pi$.
For $a \in \Gamma_{n,k}$, let $\vpk(a)$ be the number of $a_i=1$ such that $a_i$ is not the rightmost $1$ and $a_{i+1}=-1$ for $1 \leq i \leq n+k-1$.
It should be mentioned that
$a$ can be also seen as a ballot path from $(0,0)$ to $(n,k)$,
while the statistic $\vpk$ can be seen as a variation of  the number of NE-turns over ballot paths as studied by Su \cite{Su}.

We will show a stronger equidistribution
over $A_{n,k}(213)$, $S_{n+1}^k(321)$ and $\Gamma_{n,k}$.

\begin{theorem}\label{thm:equidis}
Statistic $\des$ over $A_{n,k}(213)$,
statistic $\vnw$ over $ S^k_{n+1}(321)$ and statistic $\vpk$ over $ \Gamma_{n,k}$ are equally distributed.
\end{theorem}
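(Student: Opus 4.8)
The plan is to establish Theorem~\ref{thm:equidis} by constructing explicit bijections among the three sets that carry the relevant statistics to one another, using $\Gamma_{n,k}$ as the central object. First I would set up a bijection $\varphi\colon A_{n,k}(213)\to \Gamma_{n,k}$. Since Proposition~\ref{prop:213ballot} already identifies the cardinalities, it suffices to make $\varphi$ a statistic-preserving map sending $\des$ to $\vpk$; bijectivity will then follow by a counting argument or be checked directly. The natural construction reads a $213$-avoiding permutation $\pi\in A(n,k)$ from left to right and records, at each step, whether the current entry contributes a ``$1$'' or a ``$-1$'' according to whether it is a weak excedance or a non-weak excedance (a drop). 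The bound $\maxdrop(\pi)\le k$ should translate precisely into the non-negativity of the partial sums, and $213$-avoidance should force the descent positions of $\pi$ to line up with the NE-turn positions $a_i=1,\,a_{i+1}=-1$ of the path, with the ``not the rightmost $1$'' proviso in the definition of $\vpk$ accounting for the boundary behavior of the last entry. So the first key step is: \emph{verify that this left-to-right excedance-word map is a well-defined bijection $A_{n,k}(213)\to\Gamma_{n,k}$ with $\des(\pi)=\vpk(\varphi(\pi))$.}

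The second step is the analogous bijection $\psi\colon S_{n+1}^k(321)\to\Gamma_{n,k}$ sending $\vnw$ to $\vpk$. Here one uses the standard structure theory of $321$-avoiding permutations: a $321$-avoiding permutation is determined by its set of excedance positions together with the (increasing) sequences of excedance and non-excedance values, which can be encoded as a lattice path. For $\sigma\in S_{n+1}^k(321)$ with last entry $k+1$, I would read $\sigma_1\cdots\sigma_n$ (everything before the forced last entry $k+1$) and again record a $1$ for each weak excedance and a $-1$ for each non-weak excedance. The $321$-avoiding condition guarantees the non-weak excedances occur in increasing order of value, and the constraint that $\sigma_{n+1}=k+1$ should be exactly what pins the total number of $-1$'s to $k$ and forces the partial-sum condition. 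By construction $\vnw(\sigma)$, the number of non-weak excedances before the last entry, equals the number of $-1$'s, and the peak statistic $\vpk$ will correspond to... — more precisely, I expect $\vnw$ itself may not map to $\vpk$ directly, so I would instead route through an intermediate statistic or adjust the encoding so that the descent-type feature of $\sigma$ (a non-weak excedance immediately following a weak excedance, before position $n+1$) matches an NE-turn of the path. The precise matching of $\vnw$ with $\vpk$ is the part that needs care.

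The third step is bookkeeping: compose $\psi$ with $\varphi^{-1}$ to get the bijection $A_{n,k}(213)\to S_{n+1}^k(321)$, and state the conclusion. Throughout, I would organize the verification around small cases and the recursive decomposition of each set (removing the largest letter, or the last letter, or the last step of the path) to make the inductions clean rather than checking the pattern conditions globally.

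I expect the main obstacle to be pinning down the correct encoding so that all three statistics — $\des$ on $213$-avoiders, $\vnw$ on $321$-avoiders, and $\vpk$ on ballot sequences — line up simultaneously under one family of paths; in particular the ``rightmost $1$'' exclusion in the definition of $\vpk$ and the forced last entry $k+1$ in $S_{n+1}^k(321)$ are the delicate boundary details, and getting the weak-excedance-word map to respect both the pattern-avoidance constraints \emph{and} the drop bound $\le k$ (equivalently, the non-negative partial sums) will require the most care. A secondary subtlety is confirming that the map out of $A(n,k)$ lands in $213$-avoiders exactly (not a larger set), which is where Proposition~\ref{prop:213ballot} is leveraged to avoid a separate characterization.
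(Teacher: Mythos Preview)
Your high-level plan---route both $A_{n,k}(213)$ and $S^k_{n+1}(321)$ through $\Gamma_{n,k}$ via explicit bijections---is exactly the strategy the paper uses. The gap is in the specific encoding you propose. Recording a $\pm1$ at each position according to weak/non-weak excedance yields a word of length $n$ (respectively, of length $n$ from the first $n$ letters of $\sigma\in S^k_{n+1}(321)$), whereas an element of $\Gamma_{n,k}$ has $n$ ones and $k$ minus-ones, hence length $n+k$. Concretely, the identity $12\cdots n$ lies in $A_{n,k}(213)$ for every $k$, but your excedance word sends it to $1^n\in\Gamma_{n,0}$ regardless of $k$; so the map is not even into $\Gamma_{n,k}$ for $k>0$, and no counting argument can rescue bijectivity. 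The same failure hits your second map: for instance $1243\in S^2_4(321)$ has excedance word $111$ on its first three letters, landing in $\Gamma_{3,0}$ rather than $\Gamma_{3,2}$. You in fact detect the problem yourself when you observe that, under your encoding, $\vnw(\sigma)$ would equal the \emph{number} of $-1$'s---but that number is $k$ for every path in $\Gamma_{n,k}$, so it cannot be equidistributed with $\vpk$.

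The paper's remedy is a \emph{run-length} encoding that uses both the positions and the values of a distinguished subsequence, so the total length becomes $n+k$. For $\pi\in A_{n,k}(213)$ one takes the right-to-left maxima $\pi_{i_1}>\cdots>\pi_{i_r}$ at positions $i_1<\cdots<i_r$ and writes $i_1$ ones, then alternately $(\pi_{i_j}-\pi_{i_{j+1}})$ minus-ones and $(i_{j+1}-i_j)$ ones for $j=1,\ldots,r-1$, finishing with $(\pi_n-n+k)$ minus-ones; Lemma~\ref{lem:lastmaxdrop} (namely $\maxdrop(\pi)=n-\pi_n$ for $213$-avoiders) is what makes the final block and the partial-sum check go through. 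The statistic match is $\des(\pi)=\rmax(\pi)-1=\vpk(a)$, using that in a $213$-avoiding permutation a position is a descent if and only if it carries a right-to-left maximum other than $\pi_n$. For $\sigma\in S^k_{n+1}(321)$ one instead builds runs from the positions $i_j$ and values $\pi_{i_j}$ of the non-weak excedances before the last letter; each such excedance contributes exactly one counted peak, giving $\vnw(\sigma)=\vpk(a)$, with the ``rightmost~$1$'' exclusion in $\vpk$ absorbed by the trailing block $1^{\,n+1-i_r}(-1)^{\,k-\pi_{i_r}}$. So the missing idea in your proposal is precisely this: encode runs from a position/value pair of a canonical subsequence, not a single $\pm1$ per position.
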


\begin{theorem}\label{thm:generating}
Let
\[G(p,q,z)=\sum_{\pi \in A_{n,k}(213),\, 0 \leq k \leq n}p^{\des(\pi)} q^{k}z^n,  \]
then we have
\begin{align}
G(p,q,z)=\frac{1-pqz-(1-z)q+(zq-pqz-q)\widetilde{F}_0}{1-z-pqz-(1-z)q}, \label{eq:main}
\end{align}
where
\[
\widetilde{F}_0=\frac{1-zq(1+p)-\sqrt{(1+zq(1-p))^2-4zq}}{2pqz}.
\]
\end{theorem}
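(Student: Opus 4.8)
The plan is to set up a functional equation for $G(p,q,z)$ by decomposing a $213$-avoiding bounded permutation according to the position and value of its largest entry, or equivalently according to the structure of ballot paths under the bijection guaranteed by Theorem~\ref{thm:equidis}. Since $\des$ over $A_{n,k}(213)$ agrees in distribution with $\vpk$ over $\Gamma_{n,k}$, it suffices to compute $\sum_{0\le k\le n} p^{\vpk(a)}q^k z^n$ summed over $a\in\Gamma_{n,k}$. A ballot path with $n$ up-steps and $k$ down-steps admits a first-return-type decomposition: reading from the left, each maximal run of $1$'s followed (when it is not the terminal run) by a $-1$ contributes a potential valley counted by $\vpk$. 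I would encode a path as a sequence of "blocks", where each block is a nonempty run of $1$'s followed by a single $-1$, except that the final block is a (possibly empty) run of $1$'s with no trailing $-1$; the non-negativity constraint on partial sums then becomes the constraint that the number of completed blocks never exceeds the number of $1$'s seen so far, which is automatic, while the count of $-1$'s must not exceed the count of $1$'s.

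Concretely, I expect to introduce an auxiliary generating function $\widetilde F_0 = \widetilde F_0(p,q,z)$ — exactly the one displayed in the statement — tracking paths that return to the $x$-axis, satisfying a quadratic $pqz\,\widetilde F_0^{\,2} - (1-zq(1+p))\,\widetilde F_0 + z\cdot(\text{something}) = 0$; one checks that the stated closed form is the branch that is a power series in $z$ with $\widetilde F_0(p,q,0)=0$. Then $G$ is obtained by appending a final ascending run and accounting for the top-level structure: the "numerator" terms $1-pqz-(1-z)q+(zq-pqz-q)\widetilde F_0$ and "denominator" $1-z-pqz-(1-z)q$ should fall out of a linear relation of the shape $G = (\text{simple rational in }p,q,z) + (\text{rational})\cdot \widetilde F_0 + (\text{rational})\cdot z\, G$, solved for $G$. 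The bookkeeping of which up-steps are "not the rightmost $1$" — i.e. the fact that $\vpk$ ignores a valley formed by the globally last $1$ — is the subtle point that forces the slightly asymmetric form of the numerator and must be handled by treating the last block separately.

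I would carry this out in the following order. First, establish the block decomposition of elements of $\Gamma_{n,k}$ and translate the non-negativity condition into an elementary combinatorial condition on blocks. Second, derive the quadratic equation for $\widetilde F_0$ from a first-return decomposition of axis-returning paths, and verify the stated radical expression is the correct root by a series expansion check. Third, assemble $G$ from $\widetilde F_0$ by the top-level decomposition, being careful about the terminal run and the "not rightmost $1$" clause in the definition of $\vpk$, and solve the resulting linear equation to obtain~\eqref{eq:main}. Finally, sanity-check the formula by specializing, e.g. $p=1$ should recover $\sum C_{n,k}q^k z^n$ via Proposition~\ref{prop:213ballot}, and $q=1$ (if the sum converges formally) or the coefficient of small powers of $z$ should match direct enumeration.

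The main obstacle I anticipate is not the algebra of solving for $G$ but pinning down the exact contribution of the final block and the exclusion of the rightmost $1$ from $\vpk$: a na\"ive first-return argument over-counts or under-counts a single valley near the end of the path, and getting the correcting terms right — precisely the $(zq-pqz-q)\widetilde F_0$ in the numerator and the appearance of $\widetilde F_0$ rather than a full path generating function — will require care. A secondary technical point is checking that one has selected the algebraic branch of the quadratic that is a genuine formal power series, which I would settle by expanding to order $z^2$ or $z^3$ and comparing with a hand count of short ballot paths weighted by $p^{\vpk}$.
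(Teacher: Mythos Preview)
Your overall strategy---pass to $\Gamma_{n,k}$ via Theorem~\ref{thm:equidis} and compute the $\vpk$-weighted generating function by a path decomposition, with an auxiliary Dyck-path series $\widetilde F_0$ satisfying a quadratic---is sound and would yield \eqref{eq:main}. It is, however, a genuinely different route from the paper's. The paper also passes to $\Gamma_{n,k}$, but then works with the weight $q^{n-k}$ rather than $q^{k}$ and sets up a \emph{last-step} recursion: it splits $\Gamma_{n,k}$ according to whether the last symbol is $1$ or $-1$, obtaining two coupled linear recurrences for the corresponding generating functions $F^u$ and $F^d$, the second of which involves the coefficient-extraction operator $\{q^{\ge 0}\}$ to enforce non-negativity. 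Solving that linear system introduces $F_0$ (the $n=k$ specialization, i.e.\ the Narayana generating function) as the ``lost'' boundary term, and one recovers $G$ by the substitution $q\mapsto q^{-1}$, $z\mapsto zq$. Your first-return approach builds $\widetilde F_0$ directly from a quadratic and then assembles $G$ on top of it; the paper's kernel-type approach never writes down the quadratic explicitly but instead imports $F_0$ from the known Narayana formula. Both handle the ``not the rightmost $1$'' clause, you via a distinguished terminal block, the paper via the asymmetry between the $F^u$ and $F^d$ recurrences.

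One concrete point to fix in your write-up: the block decomposition you describe (``nonempty run of $1$'s followed by a single $-1$'') does not cover paths with consecutive $-1$'s, e.g.\ $1,1,-1,-1\in\Gamma_{2,2}$ has no such factorization. You will need the genuine first-return decomposition (factor at the first return to level $0$, giving $a = 1\,a'\,(-1)\,a''$ with $a'$ a shifted Dyck path and $a''$ a shorter ballot path) rather than the run-based encoding, and then track how many of the recorded peaks involve the globally last $1$. Once that is straightened out, the algebra you outline goes through.
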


The outline of this paper is as follows. In Section \ref{sec:TH1}, we shall prove Theorem~\ref{thm:tha} algebraically and bijectively.
In Section \ref{sec:TH23}, we will investigate
$A_{n,k}(213)$ as well as the distributions of $\des$
over $A_{n,k}(213)$.

\section{A proof of Theorem~\ref{thm:tha}}\label{sec:TH1}

In this section, we will give two different factorizations of $\sum_{\sigma \in S(n,k)} \sigma$ by maintaining Petersen's method,
where
$$S(n,k)=\{\sigma \in S_n \colon \max_i(\sigma_i-i )\leq k\}.$$
These two factorizations allow us to give a proof of Theorem \ref{thm:tha}.

Recall that both the set of transpositions $T=\{(i,j) \colon 1 \leq i<j \leq n \}$
and the set of adjacent transpositions $S=\{(i,i+1)\colon 1\leq i \leq n-1\}$
are  generating sets for $S_n$. For convenience, we write $t_{i,j}=(i,j)$ and $s_i=(i,i+1)$.  The readers are recommended to see \cite{Brenti} for a comprehensive introduction.

Given a permutation $\pi$ in $S_{n-1}$, in order to generate a permutation in $S_n$, we may insert $n$ in all
possible positions of $\pi$. This can be performed analogously to
generate a permutation in $S(n,k)$ from $S(n-1,k)$.
Clearly, $S(n,k)=S_n$ when $n \leq k$ and $S(n,0)=12 \cdots n$.
Hence, we need only to consider the cases when $0 < k < n$.
In terms of the group algebra, for  $0 < k < n$, we define
\begin{equation*}
\Psi_{i,k}= \left\{
  \begin{array}{ll}
1+s_i + s_i s_{i-1}+ \cdots +s_i s_{i-1} \cdots s_1, & \mbox{ $0<i<k$,}\\[6pt]
1+s_i + s_i s_{i-1}+ \cdots +s_i s_{i-1} \cdots s_{i+1-k}, & \mbox{ $k \leq i \leq n-1$.}
 \end{array} \right.
\end{equation*}
Notice that each element in $\Psi_{i,k}$ can be seen as a reduced expression in $S_{i+1}$.

\begin{lemma}
For $n \geq 2$ and $0< k< n$, we have
\begin{equation}\label{eq:PSI}
  \sum_{w \in S(n,k)}w=\Psi_{1,k}\Psi_{2,k}\cdots\Psi_{n-1,k}.
\end{equation}
\end{lemma}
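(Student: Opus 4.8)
The plan is to prove the factorization \eqref{eq:PSI} by induction on $n$, mirroring the classical argument (due to Petersen, and ultimately to the theory of reduced words in $S_n$) that $\sum_{w\in S_n} w = \Phi_1\Phi_2\cdots\Phi_{n-1}$ where $\Phi_i = 1 + s_i + s_is_{i-1} + \cdots + s_is_{i-1}\cdots s_1$. The key combinatorial fact I would isolate first is that right-multiplication by $\Psi_{n-1,k}$ corresponds exactly to the insertion procedure described just before the lemma: starting from a permutation $\pi\in S(n-1,k)$, the element $\pi\cdot s_{n-1}s_{n-2}\cdots s_{n-1-j}$ is the permutation of $[n]$ obtained by inserting the letter $n$ into position $n-j$ of $\pi$ (equivalently, cycling $n$ leftward by $j$ steps), and as $j$ ranges over the allowed values these insertions are distinct and produce every element of $S(n,k)$ whose restriction to $\{1,\dots,n-1\}$ (after deleting $n$) is $\pi$.

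First I would set up the base case $n=2$: here $k=1$, $\Psi_{1,1} = 1 + s_1$, and $S(2,1) = S_2 = \{12, 21\}$, so $\sum_{w\in S(2,1)} w = 1 + s_1$, which checks. Second, for the inductive step I would assume $\sum_{w\in S(n-1,k)} w = \Psi_{1,k}\cdots\Psi_{n-2,k}$ (noting that if $k = n-1$ then $S(n-1,k) = S_{n-1}$ and the needed identity is the classical one, while if $k<n-1$ the inductive hypothesis applies directly, so the two regimes for the last factor are handled uniformly). It then suffices to show
\begin{equation*}
\sum_{w\in S(n,k)} w = \Bigl(\sum_{\pi\in S(n-1,k)} \pi\Bigr)\,\Psi_{n-1,k}.
\end{equation*}
Third, I would establish this last identity by analyzing, for each fixed $\pi\in S(n-1,k)$, the set $\{\pi\cdot m : m\in\Psi_{n-1,k}\}$: the monomial $m_j := s_{n-1}s_{n-2}\cdots s_{n-j}$ (with $m_0 = 1$) sends $\pi = \pi_1\cdots\pi_{n-1}$, viewed inside $S_n$ fixing $n$, to the word obtained by moving $n$ from the last position leftward past $j$ letters. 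The condition defining $\Psi_{n-1,k}$ — namely $j$ runs from $0$ to $k$ when $n-1\ge k$ — is precisely the condition that the drop created at the position where $n$ lands, and all drops shifted by the insertion, stay $\le k$; I would verify that inserting $n$ into position $n-j$ of a permutation in $S(n-1,k)$ yields a permutation in $S(n,k)$ iff $j\le k$, and conversely every element of $S(n,k)$ arises exactly once this way (delete $n$ to recover $\pi$, and $j$ is determined by the position of $n$). Since each $\pi\cdot m_j$ is a distinct permutation (the maps are injective as $n$ occupies distinct positions), the product expands without cancellation to $\sum_{w\in S(n,k)} w$.

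The main obstacle, and the step deserving the most care, is the precise bookkeeping in the third step: checking that the upper limit $j\le k$ in the definition of $\Psi_{n-1,k}$ exactly matches the constraint $\maxdrop \le k$ after insertion, and that no element of $S(n,k)$ is produced twice or missed. Concretely, if $n$ is inserted into position $p = n-j$ of $\pi\in S(n-1,k)$, the new permutation $w$ has $w_p = n$ (no drop there, since $n\ge p$), and for $i>p$ we have $w_i = \pi_{i-1}$, so the drop at $i$ becomes $i - \pi_{i-1} = (i-1-\pi_{i-1}) + 1$, i.e.\ each old drop among positions $p,\dots,n-1$ increases by exactly $1$; positions $<p$ are unchanged. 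Thus $\maxdrop(w)\le k$ forces every such shifted drop to have been $\le k-1$ in $\pi$, but one must argue this is automatically guaranteed precisely by $j\le k$ together with $\pi\in S(n-1,k)$ — the point being that the largest drop-size that can be \emph{created} (as opposed to inherited) is controlled by how far $n$ travels, and a careful case analysis (or an appeal to the excedance-based description $S(n,k) = \{\sigma : \max_i(\sigma_i - i)\le k\}$ used in the section, via inverses) pins down the equivalence. Everything else is the standard no-cancellation expansion of a product of sums of group elements, which I would state but not belabor.
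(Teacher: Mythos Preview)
Your approach is essentially the same as the paper's: induction on $n$, with the inductive step established by showing that right-multiplication by $\Psi_{n-1,k}$ inserts $n$ into all admissible positions.

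One correction to your ``main obstacle'' paragraph: you have conflated $S(n,k)$ with $A(n,k)$. Recall $S(n,k)=\{\sigma:\max_i(\sigma_i-i)\le k\}$ is defined by the \emph{excedance} (rise) condition, not the drop condition; no passage through inverses is needed. With the correct definition the verification is immediate rather than delicate. If $w=\pi\cdot m_j$ has $n$ in position $p=n-j$, then for $i<p$ we have $w_i=\pi_i$ and $w_i-i=\pi_i-i\le k$; for $i>p$ we have $w_i=\pi_{i-1}$ and $w_i-i=\pi_{i-1}-(i-1)-1\le k-1$; and at $i=p$ we have $w_p-p=n-(n-j)=j$. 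Thus $w\in S(n,k)$ if and only if $j\le k$, which is exactly the range of $j$ appearing in $\Psi_{n-1,k}$ (or $j\le n-1$ when $n-1<k$, which is then the binding constraint). Your worry that ``each old drop increases by $1$'' and must therefore have been $\le k-1$ beforehand applies to $A(n,k)$, not $S(n,k)$; here the shifted entries have their excedance \emph{decrease} by $1$, so no extra hypothesis on $\pi$ is required. The paper's proof dispatches this in a single line for precisely this reason.
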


\begin{proof}
 We will give a proof by induction on $n$. It is clear that $\sum_{w \in S(2,k)} w=\Psi_{1,k}$. We assume that
equation (\ref{eq:PSI}) holds for $n=m-1$, namely,
\begin{equation}\label{eq:induction_m-1}
  \sum_{u \in S(m-1,k)} u=\Psi_{1,k}\Psi_{2,k}\cdots\Psi_{m-2,k}.
\end{equation}
We proceed to prove that it holds for $n=m$.
Clearly, we can identify the elements in $S(m-1,k)$ with the set $$\{\sigma \in S(m,k) \colon \sigma(m)=m\}.$$
Assume that $\sigma=\sigma_1\sigma_2\cdots \sigma_{m-1}m \in  S(m,k)$,
there are two cases to consider.
If $k > m-1$, then we have
\begin{align*}
 \sigma \cdot \Psi_{m-1,k}&= \sigma+ \sigma s_{m-1}+ \cdots +\sigma s_{m-1} s_{m-2} \cdots s_1 \\
  &=\sigma_1\sigma_2\cdots \sigma_{m-1}m+\sigma_1\sigma_2\cdots m\sigma_{m-1}+ \cdots +m\sigma_1\sigma_2\cdots \sigma_{m-1}.
\end{align*}
If $k < m$,  we have
\begin{align*}
 \sigma \cdot \Psi_{m-1,k}&= \sigma+ \sigma s_{m-1}+\sigma s_{m-1} s_{m-2} \cdots +\sigma s_{m-1} s_{m-2} \cdots s_{m-k} \\
  &=\sigma_1\sigma_2\cdots \sigma_{m-1}m+\sigma_1\sigma_2\cdots m\sigma_{m-1}+ \cdots + \sigma_1\sigma_2\cdots\sigma_{m-k-1} m \sigma_{m-k}\cdots  \sigma_{m-1}.
\end{align*}
Note that $m$ appears at
all possible positions in the above two cases such that $max_i\{\pi_i -i \}\leq k$. Hence, we have
\begin{equation}\label{eq:induction_m}
\sum_{w \in S(m,k)} w=\sum_{\sigma \in S(m,k), \sigma(m)=m} \sigma \cdot \Psi_{m-1,k}= \sum_{u \in S(m-1,k)} u \cdot \Psi_{m-1,k}.
\end{equation}
Then, equation (\ref{eq:PSI}) follows from (\ref{eq:induction_m-1})
and (\ref{eq:induction_m}). This completes the proof.
\end{proof}

Based on the factorization above, we obtain the following theorem.
\begin{theorem}\label{tha1}
For $n \geq 1$ and $0< k \leq  n$, we have
\begin{equation}\label{eq:orign}
\sum_{\sigma \in A(n,k)} q^{\mathrm{inv(\sigma)}}t^{\mathrm{lmax(\sigma)}}=(t+[k+1]_q-1)^{n-k}
     \prod_{i=1}^{k}(t+[i]_q-1).
\end{equation}
\end{theorem}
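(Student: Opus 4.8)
The plan is to extract a generating-function identity from the algebra identity (\ref{eq:PSI}) by evaluating the group-algebra element $\sum_{w\in S(n,k)}w$ under a suitable linear functional. Concretely, one works in the group algebra over the polynomial ring $\numset{Z}[q,t]$ and takes the linear map $\varepsilon$ sending each permutation $\sigma$ to $q^{\inv(\sigma)}t^{\lmax(\sigma)}$ — but since this map is not an algebra homomorphism, the real work is to track how $\inv$ and $\lmax$ behave as one multiplies on the right by the factors $\Psi_{i,k}$. The key observation, which is exactly Petersen's trick, is that right multiplication of $\sigma \in S_{i}$ (viewed inside $S_{i+1}$ with $\sigma(i+1)=i+1$) by $s_i s_{i-1}\cdots s_{i-j+1}$ inserts the value $i+1$ into position $i+1-j$, i.e. it records the insertion of a new largest letter at a chosen slot. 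Inserting the new maximum at the rightmost slot keeps it a left-to-right maximum and adds $0$ inversions; inserting it $j$ steps to the left destroys that left-to-right maximum status, adds exactly $j$ inversions, and leaves all previous left-to-right maxima intact (the new largest element, when not at the end, is never itself an $\Lmal$, and its presence does not change which earlier entries are $\Lmal$).

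So the second step is to make this precise: if we write $f_m(q,t)=\sum_{\sigma\in S(m,k)}q^{\inv(\sigma)}t^{\lmax(\sigma)}$, then applying (\ref{eq:induction_m}) and the insertion analysis gives
\begin{equation*}
f_m(q,t)=f_{m-1}(q,t)\cdot\Bigl(t+\sum_{j=1}^{r}q^{j}\Bigr),
\end{equation*}
where $r=m-1$ when $m-1<k$ (i.e. insertion into all $m$ positions is allowed) and $r=k$ when $k\le m-1$ (only the last $k+1$ positions are allowed by the maxdrop constraint). Here the $t$ records "insert at the right end, new $\Lmal$", and the $q^j$ for $j\ge 1$ records "insert $j$ slots from the right, not an $\Lmal$, $+j$ inversions". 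Since $t+\sum_{j=1}^{r}q^j = t+[r+1]_q-1$, this recursion reads $f_m = f_{m-1}\cdot(t+[\min(m-1,k)+1]_q-1)$. Unwinding from $f_1=t$ (or $f_0=1$): for $m=1,\dots,k$ we pick up the factors $t+[m]_q-1$, giving $\prod_{i=1}^{k}(t+[i]_q-1)$ after reaching $m=k$, and then for each further step $m=k+1,\dots,n$ we pick up one factor $t+[k+1]_q-1$, for a total of $n-k$ such factors. This is exactly (\ref{eq:orign}).

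One should be slightly careful about boundary cases: the statement is for $0<k\le n$, and when $k=n$ the product $\prod_{i=1}^{k}(t+[i]_q-1)$ already has $n$ factors while $(t+[k+1]_q-1)^{n-k}=(\cdots)^0=1$, consistent with $A(n,n)=S_n$ and Petersen's original identity $\sum_{\sigma\in S_n}q^{\inv}t^{\rmin}=\prod_{i=1}^n(t+[i]_q-1)$ — note $\lmax$ on $S_n$ is equidistributed with $\rmin$, so the base identity matches. Also $f_m(q,t)$ is genuinely multiplicative along the recursion only because the insertion map is a bijection between $S(m-1,k)\times\{\text{allowed slots}\}$ and $S(m,k)$, which is precisely what the Lemma's proof already establishes at the level of the group algebra; applying the functional $\varepsilon$ to both sides of (\ref{eq:induction_m}) is legitimate because each monomial on the right corresponds to a distinct permutation with a cleanly computed pair $(\inv,\lmax)$.

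The main obstacle is the bookkeeping in the insertion step: one must verify carefully that inserting the largest letter $m$ into position $m-j$ changes $\inv$ by exactly $j$ (clear: $m$ now sits to the left of exactly $j$ smaller entries and to the right of all the rest, with no other inversions affected) and changes $\lmax$ by $+1$ if $j=0$ and by $0$ if $j\ge 1$ (the subtle point being that $m$ itself contributes to $\Lmal$ only when it is the last entry, and that prepending "$\ldots m\ldots$" in the interior neither creates nor destroys left-to-right-maximum status of any entry to its right, since those entries are all $<m$ but their comparison is only with entries to their \emph{left}, among which $m$ now appears and is larger — wait, this does affect them). This last point deserves real care: if $m$ is inserted at position $p<m$, then every entry originally in positions $p,\dots,m-1$ now has $m$ somewhere to its left, so none of them can be a left-to-right maximum anymore — but in fact none of them could have been a left-to-right maximum of the original word either, \emph{except possibly the entry that was the running maximum just before position $p$}; however that entry lies in position $<p$, hence still to the left of $m$'s new position, so it is unaffected. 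Entries in positions $\ge p$ of the original word: the only one that mattered as an $\Lmal$ candidate would need to exceed everything before it including the old running max, impossible since $m-1\ge$ everything and $m$ was appended last originally meaning the old word on $[m-1]$ had its own $\Lmal$ set unchanged by appending $m$ at the end. So I would state this as a short lemma — "inserting a new maximum at interior position does not change $\lmax$" — prove it in two lines, and then the rest is the clean product telescoping described above.
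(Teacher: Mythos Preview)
Your overall strategy---apply a linear functional to the factorization~(\ref{eq:PSI}) and track how the pair of statistics changes under right multiplication by each $\Psi_{i,k}$---is exactly the paper's approach. But there is a genuine error in which statistic you track.

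You set $f_m(q,t)=\sum_{\sigma\in S(m,k)}q^{\inv(\sigma)}t^{\lmax(\sigma)}$ and claim that inserting the new maximum $m$ at an interior position leaves $\lmax$ unchanged. This is false. Take $\sigma=12$, with $\lmax(\sigma)=2$; inserting $3$ at position~$1$ gives $312$, with $\lmax=1$. In general, $m$ is \emph{always} a left-to-right maximum wherever it lands (it exceeds everything to its left), and it kills every former left-to-right maximum to its right. So the change in $\lmax$ is $1$ minus the number of old left-to-right maxima at positions $\ge p$, which is not constant across insertion slots. Your own hesitation (``wait, this does affect them'') was correct; the recovery argument that follows it is not.

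The statistic that \emph{does} behave the way you want is $\rmin$: inserting the maximum $m$ at the far right makes $m$ a new right-to-left minimum (change $+1$), while inserting $m$ anywhere else neither makes $m$ a right-to-left minimum nor disturbs any other entry's status (change $0$). This is why the paper first passes from $A(n,k)$ to $S(n,k)$ via $\sigma\mapsto\sigma^{-1}$, using $\inv(\sigma)=\inv(\sigma^{-1})$ and $\lmax(\sigma)=\rmin(\sigma^{-1})$, and then runs the insertion argument with the functional $\varphi(\sigma)=q^{\inv(\sigma)}t^{n-\rmin(\sigma)}$. With that substitution your recursion $f_m=f_{m-1}\cdot(t+[\min(m-1,k)+1]_q-1)$ is valid and the telescoping product follows exactly as you wrote. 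Note in particular that $\sum_{\sigma\in S(n,k)}q^{\inv}t^{\lmax}$ is \emph{not} the quantity in the theorem; for instance when $n=2$, $k=1$ one has $S(2,1)=S_2=A(2,1)$ and both sums agree, but for $n=3$, $k=1$ the set $S(3,1)=\{123,213,132\}$ gives $\sum q^{\inv}t^{\lmax}=t^3+2qt^2$, whereas $A(3,1)=\{123,213,312\}$ gives $t^3+qt^2+q^2t=(t+q)^2\cdot t$, the claimed product.
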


\begin{proof}
It is easy to see that  $\sigma \in A(n,k)$ if and only if
$\sigma^{-1} \in S(n,k)$ and  $lmax(\sigma)=rmin(\sigma^{-1})$.
It follows that
 \[\sum_{\sigma \in A(n,k)} q^{\mathrm{inv(\sigma)}}t^{\mathrm{lmax(\sigma)}}
 =
 \sum_{\sigma \in S(n,k)} q^{\mathrm{inv(\sigma)}}t^{\mathrm{rmin(\sigma)}}.
 \]
 Hence, to prove (\ref{eq:orign}), it suffices to show that
 \begin{equation}\label{eq:suffice}
 \sum_{\sigma \in S(n,k)} q^{\mathrm{inv(\sigma)}}t^{\mathrm{n-rmin(\sigma)}}=(1+t[k+1]_q-t)^{n-k}
     \prod_{i=1}^{k-1}(1+t[i+1]_q-t).
 \end{equation}

Let $\varphi$  be a linear map from $\mathbb{Z}[S(n,k)]$
to $\mathbb{Z}[q,t]$ such that $\mathrm{\varphi(\sigma)=q^{inv(\sigma)}t^{n-rmin(\sigma)}}$.
By the definition of $\Psi_{i,k}$, we see that
\begin{equation}\label{eq:varphipsi}
\varphi(\Psi_{i,k})= \left\{
  \begin{array}{ll}
 1+qt+\cdots +q^i t, & \mbox{ $0<i<k$},\\[6pt]
 1+qt+\cdots +q^k t, & \mbox{ $k \leq i \leq n-1$}.
 \end{array} \right.
\end{equation}

For $u \in S(i,k)$ with $u_i=i$ and $k+1 \leq i \leq n$, we deduce that
\begin{align*}
  \varphi(u \cdot \Psi_{i-1,k}) &= \varphi(u)+ \varphi(u s_{i-1})+\varphi(u s_{i-1} s_{i-2}) + \cdots +\varphi(u s_{i-1} s_{i-2} \cdots s_{i-k})\\[3pt]
   &=\varphi(u)+qt\varphi(u)+q^2t\varphi(u)+
   \cdots+q^kt\varphi(u)\\[3pt]
&=\varphi(u) \varphi(\Psi_{i-1,k}).
\end{align*}
Similarly, we may show that $\varphi(u \cdot \Psi_{i-1,k})=\varphi(u) \varphi(\Psi_{i-1,k})$  for $1 < i \leq k$.

Hence, we deduce that
\begin{align*}
   \varphi(\Psi_{1,k}\Psi_{2,k} \cdots \Psi_{n-1,k})  & =\varphi(\sum_{w \in S(n,k),w_n=n} w \cdot\Psi_{n-1,k} )  \\[3pt]
    &=\sum_{w \in S(n,k),w_n=n} \varphi(w \cdot\Psi_{n-1,k} ) \\[3pt]
    &=\sum_{w \in S(n,k),w_n=n} \varphi(w) \varphi(\Psi_{n-1,k} )\\[3pt]
    &=\varphi(\sum_{w \in S(n,k),w_n=n} w) \varphi(\Psi_{n-1,k} )\\[3pt]
    &=\varphi(\Psi_{1,k}\Psi_{2,k} \cdots \Psi_{n-2,k})\varphi(\Psi_{n-1,k} ).
\end{align*}
Then, (\ref{eq:suffice}) follows from (\ref{eq:varphipsi}).
\end{proof}

By setting $t=1$ in (\ref{eq:orign}), we deduce the following corollary, which coincides with Corollary 2.2
in \cite{ChungClaesson}.
\begin{corollary}
For $n \geq 1$ and $0< k \leq  n$, we have
\begin{equation*}
\sum_{\sigma \in A(n,k)} q^{\mathrm{inv(\sigma)}} =[k+1]_q ^{n-k} [k]_q! .
\end{equation*}
\end{corollary}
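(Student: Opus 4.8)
The plan is to obtain this corollary as an immediate specialization of Theorem~\ref{tha1}, which already records the full bivariate generating function
\[
\sum_{\sigma \in A(n,k)} q^{\inv(\sigma)}t^{\lmax(\sigma)}=(t+[k+1]_q-1)^{n-k}\prod_{i=1}^{k}(t+[i]_q-1).
\]
Since the statistic $\lmax$ is being tracked by the exponent of $t$, setting $t=1$ collapses it and leaves exactly the univariate sum $\sum_{\sigma \in A(n,k)} q^{\inv(\sigma)}$ on the left-hand side. Thus the entire task reduces to evaluating the right-hand side of the displayed identity at $t=1$.

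For the right-hand side, I would simplify the two factors separately. First, the base of the power specializes as $t+[k+1]_q-1\big|_{t=1}=[k+1]_q$, so the first factor becomes $[k+1]_q^{\,n-k}$. Second, each factor in the product specializes as $t+[i]_q-1\big|_{t=1}=[i]_q$, whence
\[
\prod_{i=1}^{k}(t+[i]_q-1)\Big|_{t=1}=\prod_{i=1}^{k}[i]_q=[1]_q[2]_q\cdots[k]_q=[k]_q!,
\]
using the definition of the $q$-factorial. Multiplying the two specialized factors yields precisely $[k+1]_q^{\,n-k}[k]_q!$, which matches the claimed formula.

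There is essentially no genuine obstacle here; the result is a direct substitution, and the only point requiring care is the bookkeeping that the product over $i=1,\dots,k$ telescopes into the standard $q$-factorial notation $[k]_q!$ rather than being left as an unsimplified product. I would also note, to align with the literature, that the resulting identity coincides with Corollary~2.2 of \cite{ChungClaesson}, so the computation can be cross-checked against that known evaluation of $\inv$ over $A(n,k)$.
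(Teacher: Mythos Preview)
Your proposal is correct and follows exactly the paper's own approach: the corollary is obtained by setting $t=1$ in Theorem~\ref{tha1} (equation~\eqref{eq:orign}), which immediately yields $[k+1]_q^{n-k}[k]_q!$, and the paper likewise notes the coincidence with Corollary~2.2 of \cite{ChungClaesson}.
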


Analogously, we proceed to give another factorization of $\sum_{\sigma \in S(n,k)} \sigma$.
For  $0 < k < n$, we define
\begin{equation*}
   \Phi_{i,k}=
   \begin{cases}
  1+t_{i,i+1} + t_{i-1,i+1} + \cdots + t_{1,i+1}; & \mbox{ $0<i < k$}\\
  1+t_{i,i+1} + t_{i-1,i+1} + \cdots + t_{i+1-k,i+1}. & \mbox{ $k \leq i \leq n-1$}\\
   \end{cases}
\end{equation*}

The following lemma shows that how each element in $S(n,k)$
can be generated in terms of the minimal length product of reflections.

\begin{lemma} \label{fac_a2}
For $n \geq 2$ and $0< k< n$, we have
\begin{equation}\label{eq:PHI}
  \sum_{\sigma \in S(n,k)} \sigma=\Phi_{1,k}\Phi_{2,k}\cdots\Phi_{n-1,k}.
\end{equation}
\end{lemma}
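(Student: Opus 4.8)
The plan is to mimic the proof of the previous lemma (equation \eqref{eq:PSI}), replacing the role of adjacent transpositions $s_i$ by the reflections $t_{j,i+1}$, and arguing by induction on $n$. The base case $n=2$ is immediate since $S(2,k)=S_2$ and $\Phi_{1,k}=1+t_{1,2}$ lists both elements. For the inductive step, assume $\sum_{u\in S(m-1,k)}u=\Phi_{1,k}\cdots\Phi_{m-2,k}$; as before we identify $S(m-1,k)$ with $\{\sigma\in S(m,k)\colon \sigma(m)=m\}$, so it suffices to prove
\begin{equation*}
\sum_{w\in S(m,k)}w=\Big(\sum_{\sigma\in S(m,k),\,\sigma(m)=m}\sigma\Big)\cdot\Phi_{m-1,k}.
\end{equation*}

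The key computation is to understand, for a fixed $\sigma=\sigma_1\cdots\sigma_{m-1}m\in S(m,k)$, what $\sigma\cdot t_{j,m}$ does. Multiplying on the right by $t_{j,m}$ swaps the values in positions $j$ and $m$, i.e.\ it produces the word obtained from $\sigma$ by moving the entry $m$ (currently in position $m$) into position $j$ and shifting $\sigma_j$ to position $m$. Since $\sigma_j\le\maxdrop$-constraint gives $\sigma_j\le j+k$, one checks that placing $m$ in position $j$ keeps the resulting word in $S(m,k)$ exactly when $j\ge m-k$ (for $m-1\ge k$), and that position $m$ then holds $\sigma_j\le j+k$, which is $\le m$ as needed; the case $j<m-1$ versus the boundary $j\ge m+1-k$ matches precisely the two branches in the definition of $\Phi_{m-1,k}$. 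Thus $\sigma\cdot\Phi_{m-1,k}$ is the sum of all words obtained from $\sigma$ by inserting $m$ into any position $j$ allowed by the bound $k$ (with $j=m$ giving $\sigma$ itself). Summing over all $\sigma$ with $\sigma(m)=m$ — equivalently, over all $u\in S(m-1,k)$ — yields every element of $S(m,k)$ exactly once, since each $w\in S(m,k)$ is recovered from the unique $u$ obtained by deleting its entry $m$ (which automatically lies in $S(m-1,k)$) together with the position of $m$ in $w$.

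Combining this with the inductive hypothesis \eqref{eq:induction_m-1} (in its $\Phi$-form) gives
\begin{equation*}
\sum_{w\in S(m,k)}w=\Big(\sum_{u\in S(m-1,k)}u\Big)\Phi_{m-1,k}=\Phi_{1,k}\Phi_{2,k}\cdots\Phi_{m-2,k}\Phi_{m-1,k},
\end{equation*}
which is \eqref{eq:PHI} for $n=m$, completing the induction.

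I expect the main obstacle to be the bookkeeping in the key computation: verifying carefully that right multiplication by $t_{j,m}$, which acts on \emph{values} rather than positions, produces exactly the insertions of $m$ permitted by $\maxdrop\le k$, and that the index ranges in the two cases $0<i<k$ and $k\le i\le n-1$ of $\Phi_{i,k}$ are the correct ones. One must also confirm that inserting $m$ never violates the bound at an \emph{earlier} position — this holds because shifting later entries one slot to the right can only decrease their drops $i-\sigma_i$ is not the relevant quantity here; rather, in $S(n,k)$ the relevant quantity is $\sigma_i-i$, and moving an entry to a later position only decreases $\sigma_i-i$, so no new violation is created. Making this monotonicity argument precise is the crux; the algebraic recursion itself is then routine and parallels the proof of \eqref{eq:PSI} verbatim.
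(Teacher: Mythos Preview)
Your overall plan (induction on $n$, identifying $S(m-1,k)$ with $\{\sigma\in S(m,k):\sigma_m=m\}$, then right-multiplying by $\Phi_{m-1,k}$) is exactly the paper's, and your first sentence about $t_{j,m}$ is correct: right multiplication by $t_{j,m}$ swaps the entries in positions $j$ and $m$. But you then drift into the language of the $\Psi$-lemma --- ``inserting $m$'', ``deleting its entry $m$'', ``shifting later entries one slot to the right'' --- and this is where the argument breaks down. The operation $\sigma\mapsto\sigma t_{j,m}$ is a \emph{swap}, not an insertion: only positions $j$ and $m$ change, and nothing is shifted. Consequently your inverse map is wrong. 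If $w\in S(m,k)$ has $w_j=m$, the $\sigma$ with $\sigma_m=m$ that produces $w$ is $\sigma=w\,t_{j,m}$, i.e.\ swap $m$ back to the last slot; it is \emph{not} the permutation obtained by deleting $m$ from $w$. (Take $m=3$, $w=312$: deletion gives $12$, but $w\,t_{1,3}=213$.) Your monotonicity paragraph about ``moving an entry to a later position only decreases $\sigma_i-i$'' is likewise irrelevant here, since no positions other than $j$ and $m$ are touched.

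What actually needs checking, and what the paper does (tersely), is twofold. Forward direction: for $\sigma$ with $\sigma_m=m$ and $j\ge\max(1,m-k)$, the swap puts $m$ at position $j$ (so $m-j\le k$) and $\sigma_j$ at position $m$ (so $\sigma_j-m\le 0\le k$), while all other positions are untouched; hence $\sigma t_{j,m}\in S(m,k)$. Inverse direction: given $w\in S(m,k)$ with $w_j=m$, we have $j\ge m-k$, and swapping back puts $w_m$ at position $j$; since $w_m\le m-1$ and $j\ge m-k$, we get $w_m-j\le k-1$, so $w\,t_{j,m}\in S(m,k)$ with last entry $m$. This gives the bijection and completes the induction. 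Once you replace the insertion/deletion description with this swap-and-swap-back argument, your proof matches the paper's.
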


\begin{proof}
We will give a proof by induction. Clearly,  $\sum_{\sigma \in S(2,k)} \sigma=\Phi_{1,k}$. We assume that
equation (\ref{eq:PHI}) holds for $n=m-1$.
Now we need to prove that it holds for $n=m$.
Recall that given an element $\sigma=\sigma_1\cdots \sigma_{m-1} $ in $S(m-1,k)$, we may identify  $\sigma$ with the permutation $ \sigma_1 \sigma_2\cdots \sigma_{m-1}m.$ We consider two cases.
If $k > m-1$, then we have
\begin{align*}
 \sigma \cdot \Phi_{m-1,k}&= \sigma+ \sigma t_{m-1,m}+ \cdots +\sigma t_{1,m} \\
  &=\sigma_1\sigma_2\cdots \sigma_{m-1}m+\sigma_1\sigma_2\cdots m\sigma_{m-1}+ \cdots +m\sigma_2\cdots \sigma_{m-1}\sigma_{1}.
\end{align*}
If $ k< m$, then we have
\begin{align*}
 \sigma \cdot \Phi_{m-1,k}&= \sigma+ \sigma t_{m-1,m}+ \cdots +\sigma t_{m-k,m} \\
  &=\sigma_1\sigma_2\cdots \sigma_{m-1}m+\sigma_1\sigma_2\cdots m\sigma_{m-1}+ \cdots + \sigma_1\sigma_2\cdots\sigma_{m-k-1} m \sigma_{m-k+1}\cdots \sigma_{m-1}\sigma_{m-k}.
\end{align*}
Notice that in the second case, the position of $m$ is restricted.
It can only change the position with $\sigma_{m-k},\sigma_{m-k+1}, \ldots, \sigma_{m-1}$.
Thus each element $\pi$ in the above sum satisfy that $max_i\{\pi_i -i \}\leq k$.
Hence, we deduce that
$$
\sum_{w \in S(m,k)} w=\sum_{\sigma \in S(m,k), \sigma(m)=m} \sigma \cdot \Phi_{m-1,k}=\Phi_{1,k}\Phi_{2,k}\cdots\Phi_{m-1,k}.$$
\end{proof}

The factorization above leads to the following theorem.
\begin{theorem} \label{tha2}
For $n \geq 1$ and $0< k \leq  n$, we have
\begin{equation}\label{eq:orign2}
\sum_{\sigma \in A(n,k)} q^{\mathrm{DIS(\sigma)}}t^{\mathrm{cyc(\sigma)}}=(t+[k+1]_q-1)^{n-k}
     \prod_{i=1}^{k}(t+[i]_q-1).
\end{equation}
\end{theorem}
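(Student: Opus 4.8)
The plan is to run the argument of Theorem~\ref{tha1} essentially verbatim, but with the reflection factorization of Lemma~\ref{fac_a2} in place of the adjacent-transposition factorization. First I would pass from $A(n,k)$ to $S(n,k)$: the map $\sigma\mapsto\sigma^{-1}$ restricts to a bijection $A(n,k)\to S(n,k)$, and since $\DIS(\sigma)=\sor(\sigma^{-1})$ while $\Cyc(\sigma)=\Cyc(\sigma^{-1})$ (a permutation and its inverse have the same cycles), one has
\[
\sum_{\sigma\in A(n,k)}q^{\DIS(\sigma)}t^{\cyc(\sigma)}=\sum_{\sigma\in S(n,k)}q^{\sor(\sigma)}t^{\cyc(\sigma)},
\]
so it suffices to evaluate the right-hand side. (For $k=n$ this reduces to Petersen's identity; assume $0<k<n$ below.)

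The crucial point is that the factorization $\sum_{\sigma\in S(n,k)}\sigma=\Phi_{1,k}\cdots\Phi_{n-1,k}$ is adapted to the sorting index. When the product is expanded, each $\sigma\in S(n,k)$ is obtained by selecting exactly one summand --- either $1$ or a reflection $t_{a,\ell+1}$ --- from each factor $\Phi_{\ell,k}$; listing the selected reflections in order of increasing $\ell$ produces a word $t_{a_1,j_1}t_{a_2,j_2}\cdots$ with $j_1<j_2<\cdots$ and $a_r<j_r$, which is precisely the canonical transposition decomposition of $\sigma$ behind Petersen's definition of $\sor$. Hence the $q$-weight accumulated by $\sigma$, namely $\sum_r(j_r-a_r)$, equals $\sor(\sigma)$. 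Moreover, in the inductive step of Lemma~\ref{fac_a2} the factor $\Phi_{\ell,k}$ acts on a permutation fixing $\ell+1$: choosing $1$ keeps $\ell+1$ a fixed point, while choosing $t_{a,\ell+1}$ merges the singleton cycle $(\ell+1)$ into the cycle through $a$, lowering $\cyc$ by exactly one.

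Now introduce the linear map $\psi\colon\mathbb{Z}[S(n,k)]\to\mathbb{Z}[q,t]$ with $\psi(\sigma)=q^{\sor(\sigma)}t^{n-\cyc(\sigma)}$; the normalization $n-\cyc$ (as $n-\rmin$ in Theorem~\ref{tha1}) is what makes $\psi$ multiplicative against the $\Phi$'s. From the previous paragraph $\psi(1)=1$, $\psi(t_{a,\ell+1})=q^{\ell+1-a}t$, and for $u\in S(n,k)$ with $u_j=j$ for all $j>\ell$ one gets
\[
\psi(u\cdot\Phi_{\ell,k})=\psi(u)\,\psi(\Phi_{\ell,k}),\qquad
\psi(\Phi_{\ell,k})=\begin{cases}1+t[\ell+1]_q-t,&0<\ell<k,\\ 1+t[k+1]_q-t,&k\le\ell\le n-1.\end{cases}
\]
Chaining this down the factorization exactly as in the proof of Theorem~\ref{tha1} gives
\[
\sum_{\sigma\in S(n,k)}q^{\sor(\sigma)}t^{n-\cyc(\sigma)}=(1+t[k+1]_q-t)^{n-k}\prod_{i=1}^{k-1}(1+t[i+1]_q-t).
\]
Finally, replacing $t$ by $1/t$ and multiplying through by $t^n$ converts $t^{n-\cyc}$ into $t^{\cyc}$ and, absorbing one factor via $t+[1]_q-1=t$, yields $\sum_{\sigma\in S(n,k)}q^{\sor(\sigma)}t^{\cyc(\sigma)}=(t+[k+1]_q-1)^{n-k}\prod_{i=1}^{k}(t+[i]_q-1)$; combined with the first display this is (\ref{eq:orign2}).

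The step I expect to require the most care is the second paragraph: verifying that the $\Phi$-factorization really reproduces Petersen's transposition decomposition --- so that $\sor$ is additive over the chosen factors --- together with the cycle-count bookkeeping under the one-letter insertion, which is exactly the content of the identity $\psi(u\cdot\Phi_{\ell,k})=\psi(u)\psi(\Phi_{\ell,k})$ together with its correct index ranges. The remainder --- the telescoping product and the $t\mapsto 1/t$ substitution --- is routine and formally parallel to what was already done for Theorem~\ref{tha1}.
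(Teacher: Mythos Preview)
Your proposal is correct and follows essentially the same route as the paper: reduce to $S(n,k)$ via $\sigma\mapsto\sigma^{-1}$ using $\DIS(\sigma)=\sor(\sigma^{-1})$ and $\cyc(\sigma)=\cyc(\sigma^{-1})$, define the weight $\psi(\sigma)=q^{\sor(\sigma)}t^{n-\cyc(\sigma)}$, verify $\psi(u\cdot\Phi_{\ell,k})=\psi(u)\psi(\Phi_{\ell,k})$ from the effect of a single transposition on $\sor$ and $\cyc$, and multiply out using Lemma~\ref{fac_a2}. The only cosmetic differences are that you spell out why the $\Phi$-expansion reproduces Petersen's canonical transposition decomposition and that you make the final $t\mapsto 1/t$ conversion explicit, both of which are fine.
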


\begin{proof}
Since $\mathrm{sor(\sigma^{-1})=DIS(\sigma)}$  and $\mathrm{cyc(\sigma)=cyc(\sigma^{-1})}$,
it suffices to show that
 \begin{equation}\label{eq:suffice2}
 \sum_{\sigma \in S(n,k)} q^{\mathrm{sor(\sigma)}}t^{n-\mathrm{cyc(\sigma)}}=(1+t[k+1]_q-t)^{n-k}
     \prod_{i=1}^{k-1}(1+t[i+1]_q-t).
 \end{equation}

Let $\phi$  be a linear map from $\mathbb{Z}[S(n,k)]$
to $\mathbb{Z}[q,t]$ such that $\mathrm{\phi(\sigma)=q^{sor(\sigma)}t^{n-cyc(\sigma)}}$.
From the construction, it is easily seen that
\begin{equation*}
  \phi(\Phi_{i,k})=
   \begin{cases}
  1+qt+\cdots +q^i t; & \mbox{ $0<i<k$}\\
  1+qt+\cdots +q^k t. & \mbox{ $k \leq i \leq n-1$}\\
   \end{cases}
\end{equation*}

Let $\sigma=\sigma_1\sigma_2\cdots\sigma_m$ with $\sigma_m=m$.
When exchange the position of $\sigma_m$ and $\sigma_i$, we investigate how the statistics $\mathrm{sor}$ and $\mathrm{cyc}$ behaves in $\sigma$. It is easily seen that
the statistic  $\mathrm{sor}$ is increased by $m-i$.
If $i=m$,
the statistic  $\mathrm{cyc}$ remains the same.
If $i<m$,   $\mathrm{cyc}$ is decreased by $1$.

Assume that $u \in S(i,k)$ with $u_i=i$ and $k+1 \leq i \leq n$. We can verify that
\begin{align*}
  \phi(u \cdot \Phi_{i-1,k}) & =\phi(u)+\phi(ut_{i-1,i})+ \cdots + \phi(ut_{i-k,i}) \\[3pt]
   &=\phi(u)+qt\phi(u)+\cdots+q^kt\phi(u)\\[3pt]
   &=\phi(u) \phi(\Phi_{i-1,k}).
\end{align*}
Similarly, we may prove that $ \phi(u \cdot \Phi_{i-1,k})=\phi(u) \phi(\Phi_{i-1,k})$ for $1<i \leq k$.

Hence, we deduce that $\phi(\Phi_{1,k}\Phi_{2,k} \cdots \Phi_{n-1,k})=\phi(\Phi_{1,k})\phi(\Phi_{2,k}) \cdots \phi(\Phi_{n-1,k})$.
It follows that
\begin{align*}
  \sum_{\sigma \in S(n,k)} q^{\mathrm{sor(\sigma)}}t^{\mathrm{n-cyc(\sigma)}} & =\phi(\sum_{\sigma \in S(n,k)} \sigma) \\[3pt]
  &=\phi(\Phi_{1,k}\Phi_{2,k} \cdots \Phi_{n-1,k}) \\[7pt]
  &=\phi(\Phi_{1,k})\phi(\Phi_{2,k}) \cdots \phi(\Phi_{n-1,k})\\[3pt]
  &=(1+t[k+1]_q-t)^{n-k}
     \prod_{i=1}^{k-1}(1+t[i+1]_q-t).
\end{align*}
This completes the proof.
\end{proof}

Combining Theorem \ref{tha1} and Theorem \ref{tha2}, we
give a proof of Theorem \ref{thm:tha}.

It should be mentioned that Chen, Gong and Guo\cite{Chen}
found that a bijection on $S_n$ defined by Foata and Han\cite{Han} mapped the statistics $\mathrm{(inv,rmin)}$ to
$\mathrm{(sor, cyc)}$. In fact, we claim that this bijection
also serves as a bijection on $S(n,k)$ that sends $\mathrm{(inv,rmin)}$ to $\mathrm{(sor, cyc)}$, which leads to a bijective proof of the equidistribution of $\mathrm{(inv,rmin)}$ and
$\mathrm{(sor, cyc)}$ over $S(n,k)$.
We present Foata and Han's bijection first.

Let $\mathrm{SE_n}$ be the set of all sequences  $a=(a_1,a_2,\ldots,a_n )$ such that $1 \leq a_i \leq i$ for $i \in [n]$. The Lehmer code of a permutation $\sigma=\sigma_1\sigma_2\cdots \sigma_n \in S_n$ is defined to be the sequence $a=Leh(\sigma)=(a_1,a_2,\ldots,a_n )$, where
$$
a_i=|\{j\colon 1 \leq j \leq i, \sigma_j \leq \sigma_i\}|.
$$
Define the A-code of a permutation $\sigma$ by $\mathrm{\text{A-code}(\sigma)=Leh ~\sigma^{-1}}$.
For $\sigma \in S_n$ and $i \in [n]$, let $k =k(i)$ be the smallest integer $k \geq 1$ such that $\sigma^{-k}(i) \leq i$. Then define
$$
\text{B-code}(\sigma)=(b_1,b_2, \ldots, b_n)~~ \text{with} ~~
b_i:=\sigma ^{-k(i)}(i).
$$
Let $\gamma=(\text{B-code})^{-1}\circ \text{A-code}$.

Combining the results of Foata and Han\cite{Han}  and Chen et al.\cite{Chen}, it is easy to see that
\[
(\inv,\Rmil,\Lmap)\sigma=(\sor,\Cyc,\Lmap)\gamma(\sigma).
\]

We assert the following proposition without giving a proof.
\begin{proposition}\label{prop:keeplmax}
 For $\sigma \in S_n$,
\begin{itemize}
  \item [1.] $Lmal(\sigma)=Rmip(\sigma^{-1})=Rmip(Leh(\sigma^{-1}))=Rmip(\text{A-code}(\sigma))$.
  \item [2.]  $Lmal(\sigma)=Rmip(\text{B-code}(\sigma))$.
\end{itemize}
\end{proposition}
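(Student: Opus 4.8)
The plan is to prove the two identities of Proposition~\ref{prop:keeplmax} separately, in each case by tracking, for a fixed value $v \in [n]$, exactly when $v$ belongs to the relevant set.

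First I would handle item~1. By definition $\Lmal(\sigma) = \{\sigma_i : \sigma_j < \sigma_i \text{ for all } j < i\}$, and the symmetry between a permutation and its inverse gives $\Lmal(\sigma) = \Rmip(\sigma^{-1})$: indeed $v = \sigma_i$ is a left-to-right maximum \emph{letter} of $\sigma$ precisely when every value smaller than $v$ occupies a position smaller than $i = \sigma^{-1}(v)$ in $\sigma$, i.e. $\sigma^{-1}(u) < \sigma^{-1}(v)$ for all $u < v$; reading this in $\sigma^{-1}$, it says that the position $v$ of $\sigma^{-1}$ has every entry to its right exceeding $\sigma^{-1}(v)$ in value — wait, one must be careful whether this produces a statement about positions or letters. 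The cleanest route is: $i \in \Rmip(w)$ iff $w_j > w_i$ for all $j > i$; apply this to $w = \text{A-code}(\sigma) = \mathrm{Leh}(\sigma^{-1})$. So the real content of item~1 is the chain $\Rmip(\sigma^{-1}) = \Rmip(\mathrm{Leh}(\sigma^{-1}))$, i.e. that passing from any permutation $\tau$ to its Lehmer code preserves the set of right-to-left minimum \emph{positions}. This I would prove by the following observation: $a_i = |\{j \le i : \tau_j \le \tau_i\}| \le i$ always, with equality $a_i = i$ exactly when $\tau_i$ is smaller than every earlier entry. More usefully, $i$ is a right-to-left minimum position of $\tau$ iff $\tau_i < \tau_j$ for all $j > i$, which forces $\{j : \tau_j < \tau_i\} \subseteq \{1,\dots,i\}$; I would show that $i \in \Rmip(\tau)$ iff $a_i = \tau_i$ (the entry $\tau_i$ equals the number of entries $\le \tau_i$, meaning all $\tau_i - 1$ smaller values sit before position $i$), and then that the condition "$a_i = \tau_i$ and this is a right-to-left minimum position of the code" matches up, using the monotonicity structure of Lehmer codes of the suffix. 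The key step — and the main obstacle — is exactly this equivalence $\Rmip(\tau) = \Rmip(\mathrm{Leh}(\tau))$; it requires carefully comparing the suffix $\tau_i \tau_{i+1}\cdots\tau_n$ with the suffix $a_i a_{i+1}\cdots a_n$ and checking that "minimal so far from the right" transfers, which is true because $a_i \le i$ with the code value at a right-to-left minimum being as small as the order-statistic forces.

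Next I would turn to item~2, $\Lmal(\sigma) = \Rmip(\text{B-code}(\sigma))$. Here I would unwind the definition of B-code: $b_i = \sigma^{-k(i)}(i)$ where $k(i) \ge 1$ is least with $\sigma^{-k(i)}(i) \le i$. Following the orbit of $i$ backwards under $\sigma$ until we first land at or below $i$, this says $b_i$ is the largest-indexed "predecessor" of $i$ in its cycle that does not exceed $i$ — equivalently, within the cycle of $\sigma$ containing $i$, $b_i$ is the element such that applying $\sigma$ repeatedly to $b_i$ stays $> i$ until it reaches $i$. I would then argue: $v \in \Lmal(\sigma)$, i.e. $v$ is a left-to-right maximum letter, iff $v$ is larger than all $\sigma_j$ with $\sigma^{-1}(v) > j$, which (as in item~1) reformulates as a right-to-left minimum position condition on $\sigma^{-1}$ at position $v$; then I would check that $b_v = v$ precisely in the appropriate cases and, more generally, that the right-to-left minima of the sequence $(b_1,\dots,b_n)$ occur at exactly the positions $v$ that are left-to-right maximum letters of $\sigma$. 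The structural fact making this work is that $b_i \le i$ for all $i$ (by construction), so the B-code behaves like the Lehmer code shifted by one in terms of the $\Rmip$ statistic; the substance is showing that the backward-orbit rule defining $b_i$ does not disturb which positions are right-to-left minima compared to the straight value comparison for $\sigma^{-1}$.

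In both parts the routine bookkeeping is the manipulation of "all smaller values lie to the left/right" type conditions, which I would not spell out in full; the single genuinely delicate point, common to the two items, is that the code constructions (Lehmer code, and the backward-orbit B-code) are designed so that right-to-left minimum \emph{positions} are an invariant — this is what ties together with the earlier displayed identity $(\inv,\Rmil,\Lmap)\sigma = (\sor,\Cyc,\Lmap)\gamma(\sigma)$ to yield $\lmax(\sigma) = \lmax(\gamma(\sigma))$ and hence a bijective proof of Theorem~\ref{thm:tha}. I expect this verification to be short once the equivalence "$i \in \Rmip(w)$ iff the code value at $i$ attains its forced minimum" is isolated as a lemma, but it is where all the care must go.
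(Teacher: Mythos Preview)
The paper does not prove this proposition: immediately before its statement the authors write ``We assert the following proposition without giving a proof.'' Hence there is no proof in the paper to compare your attempt against.

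That said, your plan for item~1 is on the right track and essentially complete once fleshed out. The equivalence $\Lmal(\sigma)=\Rmip(\sigma^{-1})$ is straightforward, and your observation that $i\in\Rmip(\tau)$ if and only if $a_i=\tau_i$ (where $a=\mathrm{Leh}(\tau)$) is correct and is the right lemma. For the remaining implication $\Rmip(\tau)=\Rmip(a)$ you should make one direction explicit: if $a_i=\tau_i$ then for any $j>i$ all of $1,\dots,\tau_i$ lie in positions $\le i<j$ and also $\tau_j>\tau_i$, so $a_j\ge\tau_i+1>a_i$; conversely, if $i\notin\Rmip(\tau)$ then take $j>i$ with $\tau_j$ minimal among $\tau_{i+1},\dots,\tau_n$, so $j\in\Rmip(\tau)$ and $a_j=\tau_j$, and one checks $a_j\le a_i$ since every value counted by $a_j$ except $\tau_j$ itself lies in a position $\le i$ and is $\le\tau_i$. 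Your sketch for item~2 is vaguer; the cleanest route is to show directly that $i\in\Rmip(\text{B-code}(\sigma))$ if and only if $b_j>b_i$ for all $j>i$, and that this happens exactly when $i$ is a left-to-right maximum letter of $\sigma$, using the cycle structure: $b_i=i$ precisely when $\sigma^{-1}(i)\le i$, and more generally the positions $j>i$ all have $b_j>b_i$ exactly when no element of the backward $\sigma$-orbit of any $j>i$ first drops to $b_i$ or below, which unwinds to the left-to-right maximum condition. This is the step that genuinely needs writing out; your proposal correctly identifies it as the crux but does not yet supply the argument.
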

Following Proposition \ref{prop:keeplmax}, we deduce that
$Lmal(\sigma)=Lmal(\gamma(\sigma))$.
Notice that the bijection $\gamma$ also keeps $\mathrm{Lmap}$. It follows that $\gamma$ also keeps the
statistic $\mathrm{\max_i \{\sigma_i-i\}}$, which
implies that $\gamma$ is   a bijection on $S(n,k)$.

\begin{proposition}
The map  $\gamma=(\text{B-code})^{-1}\circ \text{A-code}$
keeps the statistic $r:=\mathrm{\max_i \{\sigma_i-i\}}$,
that is, for any permutation $\sigma \in S_n$, we have
\[r (\sigma)=r(\gamma(\sigma)).\]
\end{proposition}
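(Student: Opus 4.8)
The plan is to prove that $r(\sigma)=\max_i\{\sigma_i-i\}$ is completely determined by the pair consisting of the set $\Lmap(\sigma)$ of left-to-right maximum positions and the set $\Lmal(\sigma)$ of left-to-right maximum letters, and then to quote the two facts already established above, namely that $\gamma$ preserves $\Lmap$ and that $\Lmal(\gamma(\sigma))=\Lmal(\sigma)$.

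First I would note that the maximum in the definition of $r(\sigma)$ is always realized at a left-to-right maximum position: if $i\notin\Lmap(\sigma)$ there is a $j<i$ with $\sigma_j>\sigma_i$, and then $\sigma_j-j>\sigma_i-i$ since $j<i$, so position $i$ cannot attain the maximum. Hence $r(\sigma)=\max\{\sigma_i-i\colon i\in\Lmap(\sigma)\}$. Next, writing $\Lmap(\sigma)=\{p_1<\cdots<p_m\}$, the values $\sigma_{p_1}<\cdots<\sigma_{p_m}$ are strictly increasing (each $p_j$ being a new record) and are exactly the elements of $\Lmal(\sigma)$; thus, if $\Lmal(\sigma)=\{v_1<\cdots<v_m\}$, then necessarily $\sigma_{p_j}=v_j$ for each $j$, and therefore
\[
r(\sigma)=\max_{1\le j\le m}\,(v_j-p_j).
\]
This expression involves only the two sets $\Lmap(\sigma)$ and $\Lmal(\sigma)$.

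It then remains to apply this formula to both $\sigma$ and $\gamma(\sigma)$. We have already observed that $\gamma$ keeps $\Lmap$, and that $\Lmal(\gamma(\sigma))=\Lmal(\sigma)$ follows from Proposition~\ref{prop:keeplmax} (since $\text{B-code}(\gamma(\sigma))=\text{A-code}(\sigma)$); hence the increasing sequences $p_1<\cdots<p_m$ and $v_1<\cdots<v_m$ are the same for $\sigma$ and for $\gamma(\sigma)$, and so $r(\gamma(\sigma))=r(\sigma)$. The only delicate point in this argument is the combinatorial bookkeeping in the middle paragraph — verifying that the pairing of record positions with record values is forced by the two sets alone — but this is elementary, so I do not anticipate a substantial obstacle.
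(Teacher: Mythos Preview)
Your argument is correct and follows exactly the route the paper takes: the paper deduces the proposition from the two facts that $\gamma$ preserves $\Lmap$ (quoted from Foata--Han and Chen--Gong--Guo) and $\Lmal$ (from Proposition~\ref{prop:keeplmax}), and you supply precisely the missing combinatorial link by showing that $r(\sigma)=\max_j(v_j-p_j)$ depends only on these two sets. The paper leaves that link implicit, so your write-up is in fact more complete than the paper's own justification.
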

\begin{theorem}
$\gamma$ can be restricted to a bijection over $S(n,k)$ such that for $\sigma \in S(n,k)$
\[(\inv,\Rmil)\sigma=(\sor,\Cyc)\gamma(\sigma).\]
\end{theorem}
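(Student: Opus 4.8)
The plan is to obtain the statement as an immediate consequence of two facts already in hand: the global identity $(\inv,\Rmil,\Lmap)\sigma=(\sor,\Cyc,\Lmap)\gamma(\sigma)$, valid for \emph{every} $\sigma\in S_n$ (Foata--Han combined with Chen, Gong and Guo), and the preceding proposition asserting that $\gamma$ preserves $r(\sigma)=\max_i\{\sigma_i-i\}$. No new machinery should be needed; the point is merely to package these two inputs correctly.

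First I would use $r(\sigma)=r(\gamma(\sigma))$ to pin down the image of $S(n,k)$ under $\gamma$. Since $S(n,k)=\{\sigma\in S_n\colon r(\sigma)\le k\}$ and $\gamma$ is a bijection of $S_n$, the equality $r(\sigma)=r(\gamma(\sigma))$ shows that $\sigma\in S(n,k)$ if and only if $\gamma(\sigma)\in S(n,k)$; hence $\gamma$ carries $S(n,k)$ bijectively onto itself, and its restriction is a bijection of $S(n,k)$. Then I would simply discard the $\Lmap$-coordinate in the global identity: for every $\sigma\in S(n,k)$ we have $(\inv(\sigma),\Rmil(\sigma))=(\sor(\gamma(\sigma)),\Cyc(\gamma(\sigma)))$, which is precisely the asserted identity, now read on the restricted map.

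For completeness I would also recall why $\gamma$ fixes $r$, since that is the one place genuine content enters. One observes that the maximum of $\sigma_i-i$ is always attained at a left-to-right maximum place: if $j<i$ and $\sigma_j>\sigma_i$ then $\sigma_j-j>\sigma_i-i$, so a position that fails to be a left-to-right maximum cannot be a place where $\sigma_i-i$ is largest. Consequently $r(\sigma)$ is determined by the pair $(\Lmap(\sigma),\Lmal(\sigma))$, the $m$th smallest element of $\Lmap(\sigma)$ being the position of the $m$th smallest element of $\Lmal(\sigma)$. By Proposition~\ref{prop:keeplmax}, together with $\text{A-code}(\sigma)=\text{B-code}(\gamma(\sigma))$ from the definition of $\gamma$, we get $\Lmal(\sigma)=\Rmip(\text{A-code}(\sigma))=\Rmip(\text{B-code}(\gamma(\sigma)))=\Lmal(\gamma(\sigma))$; combined with the fact that $\gamma$ preserves $\Lmap$, this yields $r(\sigma)=r(\gamma(\sigma))$.

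The main obstacle is really upstream, not in the theorem itself: the theorem is a bookkeeping corollary once one has (i) the Foata--Han/Chen--Gong--Guo identity and (ii) the invariance of $r$ under $\gamma$. The nontrivial ingredients are Proposition~\ref{prop:keeplmax} — that A-code and B-code translate left-to-right maximum letters into right-to-left minimum positions — and the elementary but essential remark that $r$ can be recovered from $\Lmap$ and $\Lmal$; granting those, the displayed equality needs nothing beyond restricting an identity already proved on all of $S_n$.
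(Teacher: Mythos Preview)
Your proposal is correct and follows the same route as the paper: the paper also derives the theorem from the Foata--Han/Chen--Gong--Guo identity $(\inv,\Rmil,\Lmap)\sigma=(\sor,\Cyc,\Lmap)\gamma(\sigma)$ together with the observation (from Proposition~\ref{prop:keeplmax} and the preservation of $\Lmap$) that $\gamma$ fixes $r(\sigma)=\max_i\{\sigma_i-i\}$, whence $\gamma$ restricts to a bijection on $S(n,k)$. Your write-up is in fact more explicit than the paper's, which leaves implicit the remark that the maximum of $\sigma_i-i$ is always realized at a left-to-right maximum place and hence is determined by the pair $(\Lmap,\Lmal)$.
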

\begin{example}
Given $\sigma=571492638$, then $\text{A-code}(\sigma)=123215285$ and
$\text{B-code}^{-1}(123215285)=573291486$.
Hence, we have $\gamma(571492638)=573291486$.
It is easy to check that
\begin{align*}
  \Lmal(571492638)&=\Lmal(573291486)=\{5,7,9\},\\[3pt]
  \Lmap(571492638)&=\Lmap(573291486)=\{1,2,5\},\\[3pt]
  r(571492638)&=r(573291486)=5.
\end{align*}
\end{example}

\section{ $A_{n,k}(213)$ and Distribution of $\des$ over  $A_{n,k}(213)$ }\label{sec:TH23}

In this section, we will show that $A_{n,k}(213)$ is calculated by
the ballot number. Further, we provide bijections between any two of the structures $A_{n,k}(213)$, $S_{n+1}^{k}(321)$ and $\Gamma_{n,k}$.
Based on this, we deduce the distribution of $\des$ over $A_{n,k}(213)$,
which can be seen as  refinements of both Narayana numbers and ballot numbers.

To give a proof of Proposition \ref{prop:213ballot},
we first recall some properties of $C_{n,k}$.
The beginnings of $C_{n,k}$ are given in Table 1.

\begin{table}[htp]\label{tab:ballot}
\begin{center}
\begin{tabular}{ c| c c c c c c c c }
  \hline
  {n}$\backslash${k} & 0 & 1 & 2 & 3 & 4 & 5 &6 &7\\ \hline
  0 & 1 &   &   &   &   &  & & \\
  1 & 1 & 1 &   &   &   &  & & \\
  2 & 1 & 2 & 2 &   &   &  & & \\
  3 & 1 & 3 & 5 & 5 &   &  & & \\
  4 & 1 & 4 & 9 & 14 & 14 & & &  \\
  5 & 1 & 5 & 14 & 28 & 42 & 42 &  &  \\
  6 & 1 & 6 & 20 & 48 & 90 & 132 & 132 &  \\
  7 & 1 & 7 & 27 & 75 & 165 & 297 & 429 &429  \\
  \hline
\end{tabular}
\end{center}
\vspace{0.5cm}
\caption{The beginnings of the ballot number $C_{n,k}$}
\end{table}
It can be checked that $C_{n,k}$ satisfies the following recurrences
\begin{eqnarray}
  C_{n,k} &=& C_{n-1,k}+C_{n,k-1}, \label{eq:re1}\\[4pt]
  C_{n,k}&=& C_{n-1,k}+C_{n-1,k-1}+\cdots + C_{n-1,1}+C_{n-1,0}. \label{eq:re2}
\end{eqnarray}

\begin{lemma}\label{lem:lastmaxdrop}
For $\pi \in S_n(213)$, we have
$\maxdrop(\pi)=n-\pi_n$.
\end{lemma}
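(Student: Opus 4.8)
The plan is to prove the (apparently stronger, but equivalent) pointwise bound $i - \pi_i \le n - \pi_n$ for every $i \in [n]$. Since $\maxdrop(\pi) = \max_{1\le i \le n}(i-\pi_i) \ge n - \pi_n$ by taking $i = n$, establishing this bound immediately yields $\maxdrop(\pi) = n - \pi_n$, with the maximum attained at $i = n$.

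First I would dispose of the positions $i$ with $\pi_i \ge \pi_n$: here $i - \pi_i \le n - \pi_i \le n - \pi_n$ using $i \le n$, so there is nothing to prove. Thus it remains to bound $i - \pi_i$ for positions $i < n$ with $v := \pi_i < \pi_n$. The key claim is that every value $w$ with $v < w \le \pi_n$ must occur at a position strictly greater than $i$. For $w = \pi_n$ this is clear, since $\pi_n$ sits at position $n > i$. If some $w$ with $v < w < \pi_n$ occurred at a position $j < i$, then the subsequence $\pi_j\,\pi_i\,\pi_n = w\,v\,\pi_n$ would satisfy $\pi_i < \pi_j < \pi_n$, i.e. it would be an occurrence of the pattern $213$, contradicting $\pi \in S_n(213)$.

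Granting the claim, the $\pi_n - v$ distinct values in $\{v+1, v+2, \dots, \pi_n\}$ all lie among the positions $\{i+1, i+2, \dots, n\}$ — note that position $i$ itself holds the value $v$, which is not in this set, so these values genuinely occupy positions to the right of $i$. Counting positions gives $n - i \ge \pi_n - v$, that is $i - \pi_i \le n - \pi_n$, completing the argument. I do not anticipate a real obstacle here; the only point that needs care is the bookkeeping in this final counting step, namely correctly excluding position $i$ so that the block $\{v+1,\dots,\pi_n\}$ is seen to lie \emph{strictly} to the right of $i$.
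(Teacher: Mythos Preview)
Your proof is correct and takes essentially the same approach as the paper. The paper phrases the key observation as ``$1,2,\ldots,\pi_n$ is a subsequence of $\pi$'' (i.e., the values up to $\pi_n$ appear in increasing order), from which it bounds $\pi^{-1}(j)-j$ for each value $j$; you instead fix a position $i$ with $\pi_i<\pi_n$ and count the values in $\{\pi_i+1,\ldots,\pi_n\}$ forced to its right, which is the same fact unpacked at a single position. Both arguments rest on the identical use of $213$-avoidance.
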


\begin{proof}
Assume that $\pi_n=i$, we have
$1,2, \ldots, i$ is a subsequence of $\pi$.
It follows that $\pi^{-1}(j)\leq n-i+j$ for $1 \leq j \leq i-1$, namely,  $\pi^{-1}(j)-j \leq n-i$. Moreover, it is easy to check that  $ \pi^{-1}(j)-j < n-i$ for $ i<j \leq n$, as desired.
\end{proof}

Now we are ready to give a proof of Proposition \ref{prop:213ballot}.
\begin{proof}[Proof of Proposition~\ref{prop:213ballot}]
Assume that
$A^i_{n}(213)=\{ \pi \in A_{n}(213) \colon \pi_n=n-i\}$. Then, we have
\[A_{n,k}(213)=  \bigcup_{i=0}^k  A^i_{n}(213). \]

We aim to show that  $\# A^i_{n}(213) = \# A_{n-1,i}(213).$
Based on Lemma \ref{lem:lastmaxdrop},
this can be verified through a simple bijection from $A^i_{n}(213)$
to $A_{n-1,i}(213)$ by deleting the last element and getting the standard order. Hence, $\#A_{n,k}(213)=  \Sigma_{i=0}^k  \# A_{n-1,i}(213)$. In view of equation (\ref{eq:re2}), we deduce that $\#A_{n,k}(213)=  C_{n,k}$.
\end{proof}

\begin{corollary}
Sets  $A_{n,k}(132)$, $A_{n,k}(2\underline{13})$ and
$A_{n,k}(\underline{13}2)$ are all enumerated by $C_{n,k}$.
\end{corollary}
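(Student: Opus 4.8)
The plan is to prove the corollary by exhibiting, for each of the three patterns, a bijection to an already-counted family, or by adapting the argument of Proposition~\ref{prop:213ballot} directly. For the classical pattern $132$, I would use the reverse-complement (or inverse) symmetry of the pattern $213$: the map $\pi \mapsto \pi'$ with $\pi'_i = n+1-\pi_{n+1-i}$ sends $213$-avoiding permutations to $132$-avoiding ones. The key point to check is that this symmetry interacts well with $\maxdrop$. Since the reverse-complement sends a drop at position $i$ of size $i-\pi_i$ to a drop at position $n+1-i$ of the same size $i-\pi_i$ in $\pi'$ (because $(n+1-i) - \pi'_{n+1-i} = (n+1-i)-(n+1-\pi_i) = \pi_i - i$ is an \emph{ascent-type} quantity, so one must instead track $\max_j(\pi_j - j)$), so I expect it is cleaner to observe that the reverse-complement swaps $\maxdrop(\pi)=\max_i(i-\pi_i)$ with $\max_i(\pi_i-i)$. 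Thus I would rather use the plain \emph{reverse} map $\pi\mapsto\pi^{r}$ with $\pi^r_i=\pi_{n+1-i}$, which sends $213$ to $312$, or the \emph{complement}, which sends $213$ to $231$ --- neither is the target. The honest route: $132 = \mathrm{reverse}(231) = \mathrm{complement}(213)$, and the complement $\pi\mapsto\bar\pi$, $\bar\pi_i = n+1-\pi_i$, sends a drop $i-\pi_i$ to $i-\bar\pi_i = i-(n+1-\pi_i) = \pi_i-i-(n-1)+\ldots$; this again flips drops and rises. So the clean statement is that $\bar\pi$ has $\maxdrop(\bar\pi) = \max_i(\pi_i + i - n - 1)$, which is not $\maxdrop(\pi)$. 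I would therefore fall back on a direct argument: prove the analogue of Lemma~\ref{lem:lastmaxdrop} for $132$-avoiding permutations --- namely that for $\pi\in S_n(132)$ one has $\maxdrop(\pi)$ determined by a fixed entry (here the first entry $\pi_1$, by the mirror of the lemma's proof) --- and then run the same recursive counting as in the proof of Proposition~\ref{prop:213ballot}, deleting that distinguished entry and reindexing, to land on recurrence~\eqref{eq:re2}.

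For the two vincular patterns $2\underline{13}$ and $\underline{13}2$, the approach is to show they have exactly the same avoiders inside each $A(n,k)$ as the classical patterns already handled. The standard fact (see \cite{Kitaev2011}) is that avoiding $2\underline{13}$ is equivalent to avoiding $213$ \emph{as long as} one also notes which subsequences can be ``un-stuck''; more precisely a permutation containing $213$ but not $2\underline{13}$ would need every occurrence of $213$ to have its ``$1$'' and ``$3$'' non-adjacent, and a minimality argument (take an occurrence with the $1$ and $3$ as close as possible) produces a genuine adjacent occurrence. I would make this precise: if $\pi$ contains $213$ via positions $a<b<c$ with $\pi_b<\pi_a<\pi_c$, choose such an occurrence with $c-b$ minimal; if $c>b+1$ then $\pi_{b+1}$ compared against $\pi_a$ and $\pi_c$ yields either a closer occurrence or a $213$ using $\pi_{b+1}$ in place of $\pi_b$ or $\pi_c$, contradicting minimality --- hence $c=b+1$ and the occurrence is already $2\underline{13}$. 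Therefore $S_n(2\underline{13}) = S_n(213)$, and intersecting with $A(n,k)$ gives $A_{n,k}(2\underline{13}) = A_{n,k}(213)$, which is $C_{n,k}$ by Proposition~\ref{prop:213ballot}. The pattern $\underline{13}2$ is handled by the same minimality trick applied to the ``$1$'' and ``$3$'' (which are the adjacent pair), showing $S_n(\underline{13}2) = S_n(132)$, hence $A_{n,k}(\underline{13}2)=A_{n,k}(132)=C_{n,k}$ using the $132$ case established above.

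The main obstacle I anticipate is the $132$ case, specifically getting a correct analogue of Lemma~\ref{lem:lastmaxdrop}. In a $132$-avoiding permutation the entries $\{\pi_1, \pi_1-1, \ldots\}$ need not form an initial segment in any obvious way, so I would think carefully about which entry controls $\maxdrop$. One workable observation: if $\pi\in S_n(132)$ and $\pi_j = n$ is the position of the maximum, then $132$-avoidance forces $\pi_1\pi_2\cdots\pi_{j-1}$ to be a permutation of $\{n-j+1,\ldots,n-1\}$ and $\pi_{j+1}\cdots\pi_n$ a permutation of $\{1,\ldots,n-j\}$, each $132$-avoiding on its block; recursing, one sees the drop sizes are governed by a well-structured decomposition, and in fact $\maxdrop(\pi) = n - \pi_n$ fails in general but $\pi_n \in \{1, n-?\}$ is constrained. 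If a clean one-entry lemma does not materialize, the fallback is the bijective route: compose the known bijection $A_{n,k}(213)\to\Gamma_{n,k}$ (or to $S^k_{n+1}(321)$) from Theorem~\ref{thm:equidis}'s proof with a pattern-swapping map $S_n(132)\to S_n(213)$ that is maxdrop-compatible, which exists precisely because $213$ and $132$ are inverse patterns and $\maxdrop(\pi^{-1})$ relates to $\max_i(\pi_i-i)$ as already used in Theorem~\ref{tha1}'s proof --- indeed $\sigma\mapsto\sigma^{-1}$ sends $S_n(132)$ to $S_n(213)$ and sends $\maxdrop$ to $\max_i(\sigma_i - i)$, which is again $\maxdrop$ after a further reverse-complement, so a two-step symmetry composite does the job. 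I would present whichever of these is shortest once the details are checked; the vincular cases are routine given the classical ones.
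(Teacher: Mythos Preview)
Your handling of the two vincular patterns is correct and is exactly the paper's argument: the standard minimality trick gives $S_n(2\underline{13})=S_n(213)$ and $S_n(\underline{13}2)=S_n(132)$ (the paper cites this as a known lemma in \cite{Fu}), so intersecting with $A(n,k)$ reduces those cases to the two classical ones.

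For the classical pattern $132$, however, both of your concrete proposals fail. There is no one-entry analogue of Lemma~\ref{lem:lastmaxdrop}: for instance $312$ and $231$ are both $132$-avoiding with $\maxdrop$ equal to $1$ and $2$ respectively, and one checks that none of $\pi_1$, $\pi_n$, or the position of $n$ determines $\maxdrop$ uniformly on $S_n(132)$. Your fallback via inversion rests on a false premise: $213$ and $132$ are each \emph{self}-inverse, so $\sigma\mapsto\sigma^{-1}$ keeps $S_n(132)$ fixed rather than sending it to $S_n(213)$. The paper's own one-line proof just invokes reverse-complement; you were right to be suspicious, since rc converts $\maxdrop(\pi)=\max_i(i-\pi_i)$ into $\max_i(\pi_i-i)$ rather than back into $\maxdrop$ (concretely $1423\in A_{4,1}(213)$ but $1423^{rc}=2314$ has $\maxdrop(2314)=2$). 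The clean fix is the two-step composite you gestured at but mis-specified: rc bijects $A_{n,k}(213)$ with $S(n,k)\cap S_n(132)$ (swapping the pattern and swapping $\maxdrop$ with $\max_i(\pi_i-i)$), and then inversion bijects the latter with $A_{n,k}(132)$ (since inversion swaps $\max_i(\sigma_i-i)$ back to $\maxdrop$ and preserves $132$-avoidance). So your ``two-step symmetry'' instinct was right, but with the roles of the two maps reversed.
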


\begin{proof}
Since $\pi^{rc} \in S_{n,k}(132)$  if and only if
$\pi  \in A_{n,k}(213)$, then  $\#A_{n,k}(213)=\#A_{n,k}(132)$.
Notice  that $\pi$ is $213$-avoiding if and only if $\pi$
is $2\underline{13}$-avoiding, see Lemma 2.8 in \cite{Fu}. Hence, $\#A_{n,k}(213)=\#A_{n,k}(2\underline{13})$. Similarly,
we have $\#A_{n,k}(132)=\#A_{n,k}(\underline{13}2)$, as desired.
\end{proof}

As mentioned by Lin and Kim \cite{LinKim},
 $S_{n+1}^k(321)$ is enumerated by $C_{n,k}$.
 We will deduce a recurrence  of $\#S_{n}^k(321)$, which
proves this fact again.
\begin{proposition}\label{prop:321}
For $0 \leq k \leq n$, we have $\#S_{n+1}^k(321)=C_{n,k}$.
\end{proposition}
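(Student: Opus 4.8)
The plan is to establish the recurrence $\#S_{n+1}^k(321) = \#S_n^k(321) + \#S_n^{k-1}(321)$ for $1 \le k \le n-1$, together with the boundary values $\#S_{n+1}^0(321) = 1$ and $\#S_{n+1}^n(321) = C_{n,n} = C_{n,n-1}$, which by \eqref{eq:re1} and the data in Table~1 uniquely determine the same numbers $C_{n,k}$. Recall that $S_{n+1}^k(321)$ consists of the $321$-avoiding permutations of $[n+1]$ whose last letter is $k+1$. I would analyze such a permutation $\pi = \pi_1 \cdots \pi_n (k+1)$ by locating the position of the largest letter $n+1$. Since $\pi$ is $321$-avoiding and $\pi_{n+1} = k+1$, no two letters both larger than $k+1$ can appear before a smaller one; in particular the letters $k+2, k+3, \ldots, n+1$ must occur in increasing order and, more precisely, they must occupy the positions $n-(n-k-1), \ldots, n$, i.e.\ $\pi$ ends with the increasing run $k+2, k+3, \ldots, n+1, k+1$ read in positions $k+1-\text{(something)}$ — I would make this precise by observing that any letter $> k+1$ sitting to the left of a letter $\le k$ would, together with $k+1$ at the end, create a $321$ pattern.

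The cleaner route is an insertion/deletion bijection paralleling the proof of Proposition~\ref{prop:213ballot}. Given $\pi \in S_{n+1}^k(321)$, consider the position $p$ of the letter $n+1$. I would show $p \in \{n-k, n-k+1, \ldots, n\}$: it cannot be $\le n-k-1$ because then the $n-p$ letters after position $p$ (other than the final $k+1$) together with $n+1$ and $k+1$ force a descent pattern, and a short argument with $321$-avoidance pins down exactly which values can follow $n+1$. From this structural description I expect to get a bijection
\[
S_{n+1}^k(321) \;\longleftrightarrow\; S_n^{k}(321) \,\sqcup\, S_n^{k-1}(321),
\]
where deleting the letter $n+1$ (and standardizing) lands in $S_n^k(321)$ when $n+1$ was \emph{not} immediately followed by $k+1$, and in $S_n^{k-1}(321)$ when it was, since in the latter case removing $n+1$ leaves $k+1$ as the second-to-last letter and a further reindexing shifts the terminal value. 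Checking that $321$-avoidance is preserved in both directions, and that the map is invertible (reinsert $n+1$ in the unique legal slot), is routine once the position of $n+1$ is understood.

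Alternatively — and this is probably the slickest option since the excerpt already supplies it — I would compose known bijections: by Proposition~\ref{prop:213ballot} we have $\#A_{n,k}(213) = C_{n,k}$, so it suffices to exhibit a bijection $A_{n,k}(213) \to S_{n+1}^k(321)$. Given $\pi \in A_{n,k}(213)$, Lemma~\ref{lem:lastmaxdrop} tells us $\maxdrop(\pi) = n - \pi_n \le k$; using the standard complement-type operation that carries $213$-avoiders to $321$-avoiders (a $213$-pattern becomes a $321$-pattern under $\pi \mapsto \pi^{rc}$ as used in the Corollary above), one converts the drop-size bound into a constraint on the last letter of the resulting $321$-avoider, and appending or relabeling one element lifts it to length $n+1$ with terminal value $k+1$. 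The main obstacle in either approach is the same: verifying carefully that the combinatorial condition ``$\maxdrop \le k$'' on the $213$-side corresponds \emph{exactly} to ``last letter equals $k+1$'' on the $321$-side under the chosen bijection, with no off-by-one error in the index $k$ versus $k+1$ and no loss of the pattern-avoidance property. I would settle that by tracking a small worked example (as in the Example above) alongside the general argument.
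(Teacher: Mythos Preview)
Your first approach has a genuine error: the recurrence you aim to prove,
\[
\#S_{n+1}^k(321)=\#S_n^k(321)+\#S_n^{k-1}(321),
\]
is \emph{false}. Under the identification $\#S_{n+1}^k(321)=C_{n,k}$ it would read $C_{n,k}=C_{n-1,k}+C_{n-1,k-1}$, which is Pascal's rule, not the ballot recurrence~\eqref{eq:re1}. A concrete counterexample: $\#S_4^2(321)=5$ (the permutations are $1243,\,1423,\,2143,\,2413,\,4123$), while $\#S_3^2(321)+\#S_3^1(321)=2+2=4$. Your deletion map ``remove $n+1$'' always produces a $321$-avoider of $[n]$ still ending in $k+1$, so it lands entirely in $S_n^k(321)$; there is no natural second piece $S_n^{k-1}(321)$, and the map is many-to-one (its fibers are indexed by the length of the increasing suffix of the image). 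The paper instead proves the correct recurrence $\#S_n^k(321)=\#S_n^{k-1}(321)+\#S_{n-1}^k(321)$ by splitting on the \emph{position of $k$} (not of $n+1$): if $\pi^{-1}(k)\le k$ one swaps $k\leftrightarrow k+1$ to reach $S_n^{k-1}(321)$, and if $\pi^{-1}(k)>k$ one deletes an element to reach $S_{n-1}^k(321)$.

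Your second approach---transporting the result from $A_{n,k}(213)$---is viable in principle (the paper does construct such a bijection later), but the map $\pi\mapsto\pi^{rc}$ you invoke sends $213$-avoiders to $132$-avoiders, not $321$-avoiders, so that particular idea does not work. A correct bijection $A_{n,k}(213)\to S_{n+1}^k(321)$ requires a genuinely different construction (via right-to-left maxima and non-weak excedances), and nothing in your sketch indicates what it would be.
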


\begin{proof}
We divide $S_n^k(321)$ into two parts, namely,
\[S_n^k(321)=\{\pi \in S_n^k(321)  \colon k \geq \pi^{-1}(k) \} \cup \{\pi \in S_n^k(321)  \colon k < \pi^{-1}(k) \}.\]
It suffices to show that
\begin{equation}\label{eq:321a}
  \#\{\pi \in S_n^k(321)  \colon k \geq \pi^{-1}(k) \}=C_{n-1,k-1},
\end{equation}
and
\begin{equation}\label{eq:321b}
  \#\{\pi \in S_n^k(321)  \colon k < \pi^{-1}(k) \}=C_{n-2,k}.
\end{equation}

To show (\ref{eq:321a}), it is enough to construct a bijection
from $\{\pi \in S_n^k(321)  \colon k \geq \pi^{-1}(k) \}$
to $\{\pi \in S_{n}^{k-1}(321) \}$ by exchanging $k$ and $k+1$ in a permutation.

To show (\ref{eq:321b}), we need to construct a bijection from
$\{\pi \in S_n^k(321)  \colon k < \pi^{-1}(k) \}$
to $\{\pi \in S_{n-1}^k(321)\}$. Given $\pi \in S_n^k(321)$
with $k < \pi^{-1}(k)$, it is easy to see that $\pi_{n-1}=k$
or $\pi_{n-1}=n$.
If $\pi_{n-1}=n$, then delete $n$.
If $\pi_{n-1}=k$,  then delete $k$ and change $k+2$ to $k$ and
$i$ to $i-1$ for $k+3 \leq i \leq n$. It is easy to check that this map is a bijection and we omit it here.
In view of (\ref{eq:re1}), we complete the proof.
\end{proof}

In the following, we will construct bijections from
$A_{n,k}(213)$ to $\Gamma_{n,k}$ and  from
$S^k_{n+1}(321)$ to $\Gamma_{n,k}$ to give a proof of
Theorem \ref{thm:equidis}.
\begin{proof}[Proof of Theorem \ref{thm:equidis}]
Firstly, we
construct a bijection $\alpha$ from  $A_{n,k}(213)$ to $\Gamma_{n,k}$.

Given $\pi=\pi_1 \pi_2 \cdots \pi_n \in A_{n,k}(213)$,
let the right-to-left maximum letters of $\pi$ be $\pi_{i_1}, \pi_{i_2},
\ldots,\pi_{i_r}$ with $i_1 <i_2 < \cdots <i_r$ and $\pi_{i_1}
>\pi_{i_2}> \cdots > \pi_{i_r}$.
Create an non-negative $n,k$-arrangement $a=\alpha(\pi)$ by the following steps
\begin{itemize}
  \item Firstly, writing $i_1$ $1$'s.
  \item Secondly, for $j$ from $1$ to $r-1$, adjoining $(\pi_{i_{j}}-\pi_{i_{j+1}})$ $-1$'s with
$(i_{j+1}-i_{j})$ $1$'s .
  \item Lastly, writing $(\pi_n-n+k)$ $-1$'s.
\end{itemize}

It is easy to check that $a$ has $i_r$ $1$'s and $(\pi_{i_1}-n+k)$ $-1$'s.
Since $i_r=n$ and $\pi_{i_1}=n$, then $a$ is an $n,k$-arrangement.
Notice that,  for $1 \leq j \leq r-1$, elements larger than $\pi_{i_{j+1}}$ is at the position
$i_j$ or to the left. Hence, $i_j \geq n- \pi_{i_{j+1}}$, which implies that $a$ is non-negative.

To show that $\alpha$ is a bijection, we shall construct its inverse.
Let $a \in \Gamma_{n,k}$  with $\vpk(a)=r-1$ and $r \geq 1$.
Assume that $a$ has $p_i$ $1$'s and $n_i$ $-1$'s alternatively for
$1 \leq i \leq r$, then the corresponding $\pi$ in $A_{n,k}(213)$
can be constructed as follows.

The positions of the right to left maxima of $\pi$  are
$n-p_r- p_{r-1}-\cdots -p_2$,  $n-p_r-p_{r-1}-\cdots-p_3$, $\cdots$, $n-p_r$, $n$.
The values of the right to left maxima of $\pi$ are
$n-k+n_{r}+n_{r-1}+\cdots +n_1$, $n-k+n_{r}+n_{r-1}+\cdots +n_2$, $\cdots$, $n-k+n_r$.
Then  picking out the largest $p_r-1$ remaining elements that are smaller than $n-k+n_r$ and filling them in the positions between $n-p_r$ and $n$ in increasing order. Similarly,
we can fill the positions between $n-p_r-\cdots-p_{j+1}-p_j$
and $n-p_r-\cdots-p_{j+1}$ in the same way for $1<j<r$.
From the construction of $\pi$, it is easily to check that
$\pi$ is $132$-avoiding.
By Lemma \ref{lem:lastmaxdrop}, we obtain that
$\maxdrop(\pi) = n - \pi_n = n-(n-k+n_r)=k-n_r\leq k$.
Hence $\pi$ is a permutation in $A_{n,k}(213)$ and
$\alpha$ is a bijection between $A_{n,k}(213)$ and $\Gamma_{n,k}$.

Moreover, it is easy to see from the construction of the bijection $\alpha$ that $\des(\pi)=\rmax(\pi)-1=\vpk(a)$.

Now, we proceed to give a bijection $\beta$ from $S_{n+1}^k(321)$ to $\Gamma_{n,k}$.
Notice  that
 a permutation is $321$-avoiding if and only if both the subsequence formed by its weak excedance values and the one formed
by the remaining non-weak excedance values are increasing.
Given

$\pi=\pi_1 \pi_2 \cdots \pi_{n+1}
 \in S_{n+1}^k(321)$,
let the non-weak excedance values
of $\pi$ that are before position $n+1$ be $\pi_{i_1}, \pi_{i_2},
\ldots,\pi_{i_r}$ with $i_1 <i_2 < \cdots <i_r$ and $\pi_{i_1}
<\pi_{i_2}< \cdots < \pi_{i_r}$.
Create a non-negative $n,k$-arrangement $a=\beta(\pi)$ by the following steps
\begin{itemize}
  \item Firstly, writing $(i_1-1)$ $1$'s and $\pi_{i_1}$ $-1$'s.
  \item Secondly, for $j$ from $1$ to $r-1$, adjoining $(i_{j+1}-i_{j})$ $1$'s with $(\pi_{i_{j+1}}-\pi_{i_{j}})$ $-1$'s.
  \item Lastly, writing $(n+1-i_{r})$ $1$'s and $(k-\pi_{i_r})$ $-1$'s.
\end{itemize}

 Since $i_j-1 \geq \pi_{i_j}$ and $n \geq k$, it is
easy to check that $a$ is an non-negative $n,k$-arrangement.

To show $\beta$ is a bijection,  we will construct its inverse.
Let $a \in \Gamma_{n,k}$ with $\vpk(a)=r-1$
and $p_i$ $1$'s and $n_i$ $-1$'s alternatively for
$1 \leq i \leq r$.
 We construct  the corresponding $\pi$ in $S^k_{n+1}(321)$ as follows.

Elements $k-n_{r}- \cdots -n_3-n_2 $,  $k-n_{r}- \cdots -n_3 $, $\cdots$, $k-n_{r}$ are at the positions
$p_1+1$, $p_1+p_2+1$, $\cdots$, $p_1+p_2+\cdots +p_{r-1}+1$.
Element $k+1$
is at the position $n+1$. The remaining elements are placed
in the remaining places in increasing order.

By the definition of $\beta$, we may easily deduce that  $\vpk(a)=\vnw(\pi)$. The proof is completed.
\end{proof}

\begin{example} Let $n=8$, $k=7$ and
 $\pi=83475612$, the right to left maxima of $\pi$
are $\pi_1=8, \pi_4=7, \pi_6=6, \pi_8=2$.
Then, we have \[a=\alpha(\pi)=1,-1,1,1,1,-1,1,1,-1,-1,-1,-1,1,1,-1.\]

As the inverse of $\alpha$, for
 $a=1,-1,1,1,1,-1,1,1,-1,-1,-1,-1,1,1,-1$,
we have $n=8$, $k=7$, $r=\vpk(a)+1=4$, $p_1=1$, $p_2=3$, $p_3=2$, $p_4=2$,
$n_1=1$, $n_2=1$, $n_3=4$ and $n_4=1$.
Then $8,7,6,2$ are placed at positions $1,4,6,8$
and $\pi=\alpha^{-1}(a)=83475612$.

Let $n=8$, $k=7$ and $\sigma=314527698$. The non-weak excedance values of $\sigma$ that are before position $9$
are $\sigma_2=1, \sigma_5=2, \sigma_7=6$.
Then, we have \[b=\beta(\sigma)=1,-1,1,1,1,-1,1,1,-1,-1,-1,-1,1,1,-1.\]

As the inverse of $\beta$, for $b=1,-1,1,1,1,-1,1,1,-1,-1,-1,-1,1,1,-1$,
we have $n=8$, $k=7$, $r=\vpk(b)+1=4$, $p_1=1$, $p_2=3$, $p_3=2$, $p_4=2$,
$n_1=1$, $n_2=1$, $n_3=4$ and $n_4=1$.
Then $1,2,6$ are placed at positions $2,5,7$
and $\sigma=\beta^{-1}(b)=314527698$.
\end{example}

For  completeness, we present the bijection from $A_{n,k}(213)$ to $S_{n+1}^k(321)$ as follows.

Given $\pi=\pi_1 \pi_2 \cdots \pi_n \in A_{n,k}(213)$,
let the right-to-left maximum letters of $\pi$ be $\pi_{i_1}, \pi_{i_2},
\ldots,\pi_{i_r}$ with $i_1 <i_2 < \cdots <i_r$ and $\pi_{i_1}
>\pi_{i_2}> \cdots > \pi_{i_r}$.
Construct a permutation $\sigma \in S^k_{n+1}(321)$ by the following
steps:

\begin{itemize}
  \item The positions of the non-weak excedances of $\sigma$ before the last element  are $i_1+1$, $i_2+1$, $\ldots$, $i_{r-1}+1$.
  \item  The values of the the non-weak excedances of $\sigma$ before the last element  are $n-\pi_{i_2}$, $n-\pi_{i_3}$, $\ldots$,
      $n-\pi_{i_r}$.
  \item  Place $k+1$ in the $n+1$-th place and then put the remaining elements to the remaining places in increasing order.
\end{itemize}

It is easy to check that it is a bijection and the inverse of the bijection is omitted. As an example, $83475612$ is mapped to $314527698$.

In the following, we will give the distribution of $\des$
over $A_{n,k}(213)$. By Theorem \ref{thm:equidis},
it suffices to deduce the distribution of $\vpk$ over $\Gamma_{n,k}$.

\begin{proof}[Proof of Theorem \ref{thm:generating}]
Let
\[F(p,q,z)=\sum_{a \in \Gamma_{n,k},\, 0 \leq k \leq n
} p^{\vpk(a)} q^{n-k}z^n,\]

\[F^u(p,q,z)=\sum_{a \in \Gamma^u_{n,k},\, 0 \leq k \leq n
} p^{\vpk(a)} q^{n-k}z^n,\]

\[F^d(p,q,z)=\sum_{a \in \Gamma^d_{n,k},\, 0 \leq k \leq n
} p^{\vpk(a)} q^{n-k}z^n.\]
where $\Gamma^u_{n,k}$ ($\Gamma^d_{n,k}$) is the set of  arrangements $a \in \Gamma(n,k)$ with  $a_{n+k}=1$ ($a_{n+k}=-1$).

Let
\begin{equation*}
  F(p,q,z)=\sum_{k \geq 0} F_k(p,z)q^k = \sum_{n \geq 0} f_n(p,q)z^n,
\end{equation*}
\begin{equation*}
  F^u(p,q,z)=\sum_{k \geq 0} F^u_k(p,z)q^k = \sum_{n \geq 0} f_n^u(p,q)z^n,
\end{equation*}
\begin{equation*}
  F^d(z,p,q)=\sum_{k \geq 0} F^d_k(p,z)q^k = \sum_{n \geq 0} f^d_n(p,q)z^n.
\end{equation*}
Then we have $f^u_1(p,q)=q$.  By considering two cases of the
the last element of an non-negative $n,k$-arrangement,
being $1$ or $-1$,  we deduce the following two recurrences.
For $n \geq 1$, we have
\begin{equation}\label{eq:furecurrence}
  f^u_{n+1}(p,q) =q f^u_{n}(p,q)  + pq f^d_{n}(p,q).
\end{equation}
For $n \geq 0$, we have

\begin{equation}\label{eq:fdrecurrence}
    f^d_{n+1}(p,q) =q^{-1} f^u_{n+1}(p,q)  + \{q^{ \geq 0}\}(q^{-1} f^d_{n+1}(p,q)),
\end{equation}
where $\{q^{ \geq 0}\}$ is the linear operator extracting all terms
in the power series representation containing non-negative powers of $q$.

Summing (\ref{eq:furecurrence}) over $n \geq 1$ and summing
(\ref{eq:fdrecurrence}) over $n \geq 0$, we deduce that
\begin{align*}
  \sum_{n \geq 1} f^u_{n+1}(p,q) z^{n+1}&=\sum_{n \geq 1} (q f^u_{n}(p,q)  + pq f^d_{n}(p,q)) z^{n+1}, \\[4pt]
  \sum_{n \geq 0} f^d_{n+1}(p,q) z^{n+1}&=\sum_{n \geq 0} (q^{-1} f^u_{n+1}(p,q)  + \{q^{ \geq 0}\}(q^{-1} f^d_{n+1}(p,q))) z^{n+1}.
\end{align*}
We deduce that
\begin{align}
  &F^u(p,q,z)-qz=qz F^u(p,q,z)+pqzF^d(p,q,z) , \label{eq:genre1} \\[5pt]
 &F^d(p,q,z)=q^{-1}F^u(p,q,z)+q^{-1}F^d(p,q,z)-q^{-1}F_0(p,z), \label{eq:genre2}
\end{align}
where
\[F_0(p,z)=\sum_{a \in \Gamma_{n,n},\, n \geq 1} p^{\vpk(a)} z^n.\]

Recall that the Narayana number $N_{n,m}$ is the number of Dyck path
of semilength $n$ having $m$ peaks and
\begin{equation*}
  \sum_{1 \leq m \leq n} N_{n,m} x^n y^m= \frac{1-x(1+y)-\sqrt{(1+x(1-y))^2-4x}}{2x},
\end{equation*}
see \cite{Callan} for reference.

\begin{align}\label{eq:F0}
 F_0(p,z)&=\frac{\sum_{1 \leq m \leq n} N_{n,m} z^n p^m}{p} \notag\\[4pt]
 &=\frac{1-z(1+p)-\sqrt{(1+z(1-p))^2-4z}}{2zp}.
\end{align}

By solving (\ref{eq:genre1}) and (\ref{eq:genre2}), we derive that
\begin{align*}
  F^u(p,q,z) & = \frac{(1-q^{-1})qz-pzF_0(z,p)}{1-qz-pz-(1-qz)q^{-1}},    \\[6pt]
  F^d(p,q,z) & = \frac{z-q^{-1}(1-qz)F_0(z,p)}{1-qz-pz-(1-qz)q^{-1}}.
\end{align*}

Then
\begin{align*}
  F(p,q,z) & =1 + F^u(z,p,q)+ F^d(z,p,q) \\[5pt]
   & =\frac{1-pz-(1-qz)q^{-1}+(z-pz-q^{-1})F_0(z,p)}{1-qz-pz-(1-qz)q^{-1}}.
\end{align*}
Hence, by changing $q$ to $q^{-1}$ and $z$ to $zq$ above we
obtain (\ref{eq:main}).
\end{proof}

\section*{Acknowledgement}
The first author was supported by the National Natural Science Foundation of China (No.~11701420) and
Natural Science Foundation Project of Tianjin Municipal Education Committee (No.~2017KJ243,~No.~2018KJ193).
The second author was supported by the National Natural Science Foundation of China (No.~11801378,~No.~12071440) and the Zhejiang Provincial Natural Science Foundation of China (No.~LQ17A010004).

\end{document}